\newcommand{\keywords}[1]{\par\addvspace\baselineskip \noindent Keywords: \enspace\ignorespaces#1}
\newtheorem{theorem}{Theorem}[section]
\newtheorem{prop}[theorem]{Proposition}
\newtheorem{assumption}[theorem]{Assumption}
\newcommand{\BI}{\begin{itemize}} \newcommand{\EI}{\end{itemize}}
\newcommand {\BE}{\begin{enumerate}} \newcommand {\EE}{\end{enumerate}}
\newcommand{\I}{\item}
\newcommand {\bena}{\begin{enumerate}\renewcommand{\labelenumi}{\alph{enumi}.}\item}
\newcommand {\beqn}{\begin{equation}}\newcommand {\eeqn}{\end{equation}}
\newcommand {\beqan}{\begin{eqnarray}}\newcommand {\eeqan}{\end{eqnarray}}
\newcommand {\beqa}{\begin{eqnarray*}}\newcommand {\eeqa}{\end{eqnarray*}}
\newcommand {\barr}{\begin{array}}\newcommand {\earr}{\end{array}}
\newcommand {\bat}{\begin{tabular}}\newcommand {\eat}{\end{tabular}}
\newcommand {\bsubeq}{\begin{subequations}}\newcommand {\esubeq}{\end{subequations}}
\newcommand{\projx}{\textup{proj}}
\newcommand{\exclude}[1]{}
\newcommand{\basetime}{$t_{\mathrm{LDR}}$}
\newcommand{\cred}{\color{black}}
\newcommand{\Exp}{\mathbb{E}}
\newcommand{\R}{\mathbb{R}}
\newcommand{\ra}{\rightarrow}
\newcommand{\cpar}{c}
\newcommand{\hpar}{h}
\newcommand{\Apar}{A}
\newcommand{\Bpar}{B}
\newcommand{\Cpar}{C}
\newcommand{\bpar}{b}
\newcommand{\Tpar}{T}
\newcommand{\Gpar}{E}
\newcommand{\Dpar}{D}
\newcommand{\dpar}{d}
\newcommand{\Ffnc}{{\cred \mathcal{Q}}}
\newcommand{\ffnc}{{\cred Q}}
\newcommand{\rpar}{r}
\newcommand{\kpar}{\ell}  
\newcommand{\ksubt}{\kpar_t}
\newcommand{\ksubs}{\kpar_\rpar}
\newcommand{\kt}{\kpar^t}
\newcommand{\Kpar}{K}
\newcommand{\Kt}{\Kpar_t}
\newcommand{\Ktmone}{\Kpar_{t-1}}
\newcommand{\Ktpone}{\Kpar_{t+1}}
\newcommand{\ppar}{p}
\newcommand{\qpar}{q}
\newcommand{\pt}{\ppar_t}
\newcommand{\qt}{\qpar_t}
\newcommand{\mpar}{m}
\newcommand{\mt}{\mpar_t}
\newcommand{\npar}{n}
\newcommand{\nt}{\npar_t}
\newcommand{\Ipar}{I}
\newcommand{\Jpar}{J}
\newcommand{\cumxipar}{\zeta}
\newcommand{\xipar}{\xi}
\newcommand{\xiT}{\xipar^T}
\newcommand{\xisubt}{\xipar_t}
\newcommand{\xit}{\xipar^t}
\newcommand{\xitmone}{\xipar^{t-1}}
\newcommand{\xitpone}{\xipar^{t+1}}
\newcommand{\allxi}{\mathbb{P}\text{-a.s.}}
\newcommand{\xisupp}{\Xi}
\newcommand{\Xset}{X}
\newcommand{\basis}{\mathrm{\Phi}}
\newcommand{\xvar}{x}
\newcommand{\hxvar}{\hat{x}}
\newcommand{\laxvar}{x^{STT}}
\newcommand{\svar}{s}
\newcommand{\lasvar}{s^{STT}}
\newcommand{\betavar}{\mathrm{\beta}}
\newcommand{\hbetavar}{\hat\betavar}
\newcommand{\betat}{\betavar_t}
\newcommand{\thetavar}{\mathrm{\Theta}}
\newcommand{\thetat}{{\cred \thetavar_{t}}}
\newcommand{\Lvar}{\lambda}
\newcommand{\Gvar}{\gamma}
\newcommand{\Lvart}{\Lvar_t(\xit)}
\newcommand{\Gvart}{\Gvar_t(\xit)}
\newcommand{\alphavar}{\mathrm{\Lambda}}
\newcommand{\halphavar}{\hat{\mathrm{\Lambda}}}
\newcommand{\hGvar}{\hat\Gvar}
\newcommand{\alphat}{\alphavar_t}
\newcommand{\Fhatfnc}{{\cred \mathcal{G}}}
\newcommand{\fhatfnc}{G}
\newcommand{\pivar}{\mathrm{\Gamma}}
\newcommand{\pit}{\pivar_t}
\newcommand{\uplusvar}{u^+}
\newcommand{\uminusvar}{u^-}
\newcommand{\zvar}{z}
\newcommand{\Tvar}{\theta}
\newcommand{\zopt}{z^{MSLP}}
\newcommand{\zuldr}{z^{LDR}}
\newcommand{\zuldrts}{z^{2S}}
\newcommand{\zusaa}{\hat{z}^{2S}_{N}}
\newcommand{\zlldr}{v^{LDR}}
\newcommand{\zlldrts}{v^{2S}}
\newcommand{\zlsaa}{\hat{v}^{2S}_{N}}
\newcommand{\dimp}{\tau_P}
\newcommand{\dimd}{\tau_D}
\newcommand{\Bset}{\mathcal{B}}
\newcommand{\Aset}{{\cred \mathcal{L}}}
\newcommand{\BTset}{\mathcal{D}}
\newcommand{\MSLP}{MSLP}
\newcommand{\DMSLP}{D-MSLP}
\newcommand{\LDRprimal}{{\cred P-LDR}}
\newcommand{\LDRdual}{{\cred D-LDR}}
\newcommand{\PartialLDRprimal}{{\cred P-LDR-2S}}
\newcommand{\PartialLDRdual}{{\cred D-LDR-2S}}
\newcommand{\MSIP}{MSIP}
\newcommand{\new}[1]{{\cred #1}}
\title{Two-stage Linear Decision Rules for Multi-stage Stochastic Programming}
\author[1]{Merve Bodur\thanks{bodur@mie.utoronto.ca}}
\author[2]{James R. Luedtke\thanks{jim.luedtke@wisc.edu}}
\affil[1]{\small Department of Mechanical and Industrial Engineering, University of Toronto} 
\affil[2]{\small Department of Industrial and Systems Engineering, University of Wisconsin-Madison}
\begin{document}

\onehalfspacing

\maketitle

\begin{abstract}
\noindent
Multi-stage stochastic linear programs (MSLPs) are notoriously hard to solve in general. Linear decision rules (LDRs)
yield an approximation of an MSLP by restricting the decisions at each stage to be an affine function
of the observed uncertain parameters. Finding an optimal LDR is a static optimization
problem that provides an upper bound on the optimal value of the MSLP, and, under certain assumptions, can be formulated as an explicit linear
program.
Similarly, as proposed by Kuhn, Wiesemann, and Georghiou (``Primal and dual linear decision rules in
stochastic and robust optimization'' {\it Math. Program.} 130, 177--209, 2011) a lower bound for an MSLP can be obtained by
restricting decisions in the dual of the MSLP to follow an LDR. 
We propose a new approximation approach for MSLPs, \emph{two-stage LDRs}. The idea is to require only the 
state variables in an MSLP to follow an LDR, which is sufficient to obtain an approximation of an MSLP that is a {\it two-stage stochastic
linear program} (2SLP). We similarly propose to apply LDR only to a subset of the variables in the dual of the MSLP, which 
yields a 2SLP approximation of the dual that provides a lower bound on the optimal value of the MSLP. Although solving the corresponding 2SLP approximations exactly is intractable
in general, we investigate how approximate solution approaches
that have been developed for solving 2SLP can be applied to solve these approximation problems, and derive statistical
upper and lower bounds on the optimal value of the MSLP.  In addition to
potentially yielding better policies and bounds, this approach requires many fewer assumptions than are required to obtain an explicit
reformulation when using the standard static LDR approach. A computational study on two example problems demonstrates that using a two-stage LDR can yield significantly better primal policies and modestly better dual policies than using policies based on a static LDR.
\end{abstract}

\keywords{Multi-stage stochastic programming, linear decision rules, two-stage approximation}

\section{Introduction}
\label{sec:intro}

We present a new approach for approximately solving multi-stage stochastic linear programs (\MSLP s). \MSLP s
model dynamic decision-making processes in which a decision is made, a stochastic outcome is observed, another
decision is made, and so on, for $T$ stages. At each stage, the decision vectors are constrained by linear constraints that depend on the history
of observed stochastic outcomes. A solution of \new{an} \MSLP \ is a policy, which defines the decisions to be
made at each stage as a function of the observed outcomes up to that stage. The objective in an \MSLP \ is to choose a
policy that minimizes the expected cost over all stages. 
Although MSLPs can be used to model a wide variety of problems (e.g., \cite{wallace-ziemba:05}), they are notoriously hard to solve in general \cite{dyer2006computational,shapiro2005complexity}. 

There are a variety of methods available for MSLPs in the case that the stochastic process is represented by a 
scenario tree \cite{caseysen:mor05,Heitsch2009,hoyland2001generating,shapiro2014lectures}.  
Such algorithms include nested Benders decomposition \cite{birge1985decomposition,Birge1996,gassmann:90}, progressive hedging
\cite{rockafellar.wets:91}, and aggregation and partitioning
\cite{boland2016scenario,birge1985aggregation}, and enable the solution of \MSLP s with possibly very large scenario
trees. Unfortunately,  as discussed in \cite{shapiro2005complexity}, the size of a scenario tree needed to obtain even a
modestly accurate approximation  grows exponentially in the number of stages. For example, a $10$-stage problem in which
the uncertainty in each stage is represented by just $50$ realizations would yield a scenario tree having nearly $2 \cdot 10^{15}$
scenarios, making any approach that requires even a single pass through the scenario tree impossible. 

Under some conditions, including stage-wise independence of the random variables, stochastic dual dynamic programming
(SDDP) \cite{pereira1991multi} can overcome the difficulty in exploding scenario tree size, by constructing a
single value function approximation for each stage.  
The SDDP algorithm
converges almost surely on a finite scenario tree \cite{chen1999convergent,shapiro2011analysis} (see also
\cite{girardeau,guigues2,philpott2008convergence} for related results). 
In some cases, such as
additive dependence \cite{infanger1996cut} and Markov dependence \cite{philpott2012dynamic}, the assumption of
stage-wise independence can be satisfied via an appropriate reformulation, e.g., see Example 10 in
\cite{shapiro2011topics} (see also \cite{guigues1} for other types of dependence). However, such
reformulations are not applicable for stage-wise dependence of random recourse matrices (i.e., in the coefficients of the
constraints) \new{ or objective coefficients}.  
Similar approaches that exploit stage-wise independence and value function approximations include multi-stage stochastic
decomposition \cite{sen2014multistage} and approximate dynamic programming \cite{powell2007approximate}.


An alternative approach to handling the complexity of \MSLP\ is to restrict the functional form of the policy. 
One such approach is the use of {\it linear decision rules} (LDRs). 
The idea of an LDR is to require that all decisions made in each stage be a linear (or affine) function of the observed
random outcomes up to that stage. The problem then reduces to a static problem of finding the best LDR, whose expected
cost then yields an upper bound on the optimal value of the \MSLP. In this paper, we refer this use of \new{an} LDR as a {\it
static LDR}.
While LDRs have a long history (see, e.g., \cite{garstka1974decision}), they have recently gained renewed interest in the mathematical
optimization literature after their application to adjustable robust optimization in \cite{ben2004adjustable}. 
The adaptation of this approach to \MSLP\ was presented in \cite{shapiro2005complexity}, and Kuhn et
al.~\cite{kuhn2011primal} analyzed the application of a static LDR in the dual of the \MSLP, which yields a lower bound on the
optimal value of the \MSLP. 
Moreover, under certain assumptions, the static approximations obtained after restricting the primal and dual policies
to be \new{an} LDR  are both tractable linear programs, as shown in \cite{shapiro2005complexity} and
\cite{kuhn2011primal}, respectively. The assumptions include stage-wise independence (or a slight generalization),
compact and polyhedral support, and \new{that uncertainty is limited to the right-hand side of the
constraints}. 
While in some cases static LDR policies provide high quality approximations to \MSLP, they have potential to be significantly
suboptimal. Better policies (primal and dual) can be obtained by
considering more flexible (nonlinear) rules such as (static) piecewise linear decision rules \cite{chen2008linear} and polynomial
decision rules \cite{bampou2011scenario}. 

We propose a new use of \new{an} LDR, which we refer to as {\it two-stage linear decision rules}. The
key idea is to partition the decision variables into {\it state} and {\it recourse} decision variables, with the property
that if the state variables are fixed, then the problem decouples into a separate problem for each stage, involving only
recourse decision variables.  
If one applies an LDR {\it only} to the
state variables, then the problem reduces to a {\it two-stage} stochastic linear program (2SLP), in contrast to a
{\it static} problem which is obtained when using a static LDR.  The advantage of two-stage LDRs is that they free the
recourse variables from the LDR requirement, thus
allowing for a potentially improved policy. Indeed, there exist feasible 2SLPs that are {\it infeasible} if one 
enforces \new{an} LDR on the recourse variables \cite{garstka1974decision}.
This idea of reducing an \MSLP\ to a 2SLP is similar to that proposed by Ahmed \cite{shabbirIMAslides}, except that in
\cite{shabbirIMAslides} the state variables are completely decided in the first-stage and fixed, whereas we allow them to vary
according to an LDR.  We also consider applying a two-stage LDR in the dual of an \MSLP, exploiting the observation that
imposing an LDR restriction {\it only} on the dual variables associated with the \emph{state equations} is
sufficient to obtain a 2SLP that approximates the multi-stage dual problem. We investigate how approximate solutions to the associated
primal and dual approximation problems can be used to obtain feasible policies with associated statistical estimates
on the optimality gap. Our analysis suggests that this can be done under mild assumptions, for example 
that the primal problem exhibits relatively complete recourse (i.e., for any current state there exists a feasible next
decision and state) and has a bounded feasible region with probability $1$. We illustrate 
the two-stage LDR approach \new{on two example problems: an inventory planning problem similar to that studied in \cite{ben2004adjustable,kuhn2011primal},
and a capacity expansion problem proposed in \cite{de2013risk}. We find that, for these problems,
using two-stage LDRs yields significantly better primal policies (upper bounds), and modestly improves on the lower bounds,
when compared to using static LDRs. For the capacity expansion problem, we also compare the two-stage LDR policies and
bounds to those obtained using the SDDP algorithm, when run for a similar amount of computational time. We find that the
SDDP algorithm yields similar lower bounds and better policies for this problem, as expected since the SDDP algorithm
is known to converge to an optimal solution. Thus, the two-stage LDR approximation is expected to be useful primarily
for problems where the SDDP algorithm does not apply.}

A significant challenge to using two-stage LDRs is that the resulting 2SLP is in general intractable to solve exactly. 
Indeed, 2SLP is $\#P$-hard
\cite{dyer2006computational,Hanasusanto2016} due to the difficulty in evaluating the expectation of the recourse
function. 
However, as argued in \cite{shapiro2005complexity}, under mild conditions Monte Carlo sampling-based methods can provide solutions of
modest accuracy to a 2SLP (such a statement cannot be made for MSLP). 
Thus, an important benefit of the two-stage LDR approach is that it enables the application of the long
history of research into solving 2SLPs to the multi-stage setting.
While using a sampling-based method may lead to a suboptimal solution of the 2SLP approximations, our hope is that
this suboptimality may be more than offset
by the improvement gained by eliminating the LDR requirement on the recourse decisions that is imposed when using a static
LDR. In addition, when using a
sampling-based approach, the assumptions that are required for obtaining a tractable reformulation when applying \new{a} static LDR are no longer needed. In particular, the random variables need not have polyhedral (or even
bounded) support, the constraint matrices may be random and dependent across time stages, and the LDR may be based on
nonlinear functions of the random variables. 

The two-stage LDR approach we propose can also be applied to certain multi-stage stochastic {\it integer programs}, in which some
of the decision variables are required to be integer valued. In particular, for the primal problem,
the approach applies directly provided integrality restrictions are imposed only on the recourse variables. 
When the state variables 
have integrality restrictions as well, the form of the decision rule applied to the state variables must be modified, but the two-stage approach still
applies. We refer
to \cite{bertsimas2015design} for one possible such decision rule based on piecewise-linear binary functions. We remark
that combining our approach with that of  \cite{bertsimas2015design} would have potential benefit in terms of
both tractability and policy quality, as removing the piecewise-linear binary decision rule requirement from the
recourse variables both eliminates the need to design such a rule, and gives those decisions more flexibility.  

\exclude{\cred  
\BI
\I Story: \\
- LDR: Reduce multistage to a static problem (single stage) \\ 
- Partial LDR: Reduce multistage to two-stage. \\
- Do it in both primal and dual.
\I Differentiate ourselves from other two-stage approximations: We do sampling side. \\
Using two-stage models as approximation of multi-stage. (1) Put all state variables as first-stage decisions. (2) Let
all variables after stage 1 be anticipative. \\
Cite Shabbir's slides, two-stage approximations
\I Overview of the approach/results.
\I Discussion of Extensions  \\
- Direct extension for primal second-stage integer (with the cost being to solve it -- motivates more work) \\
- Lagrangian duality for dual bounds (with any variables being integer) \\
- Can also combine with small number of stages in multi-stage
- Rolling horizon heuristics.
\EI
}



\exclude{
We consider problems involving a finite-horizon sequence of decisions under uncertainty which is revealed gradually overtime, where the probability distributions governing the uncertain data are assumed to be known or can be estimated. The horizon consists of $\Tpar$ decision stages. There are $\ksubt$ uncertain parameters whose outcomes to be observed at stage $t \in \{1,\hdots,\Tpar \}$, denoted by $\xisubt \in \R^{\ksubt}$. The set of decisions at stage $t$ depends only on the uncertain information revealed by stage $t$ which is denoted by $\xit = (\xipar_1,\hdots,\xisubt) \in \R^{\kt}$, where $\kt := \sum_{\rpar=1}^t \ksubs$. This requirement is known as \emph{nonanticipativity}. For convenience, without loss of generality, we assume that $\xipar_1 = 1$ (which indicates that the first-stage decisions are deterministic). Uncertainty is modeled by a probability space $(\R^{\kt}, \mathcal{B}(\R^{\kt}),\mathbb{P})$. The support of the probability measure $\mathbb{P}$ is denoted by $\Xi$, i.e., the random vector $\xipar := \xipar^\Tpar = (\xipar_1,\hdots,\xipar_\Tpar)$ takes values from the set $\Xi$. For convenience, we denote the interval of integers $a,a+1,\dots,b$ by $[a,b]$, and by simply $[b]$ if $a=1$.

In many practical applications, there exist decisions that are required to be integer-valued, which brings an additional
complexity to the multi-stage models. There are limited solution approaches for multi-stage stochastic integer programs (\MSIP s). The majority of the existing methods is based on a scenario tree, thus suffers from the curse of dimensionality. Moreover, a relaxation of the problem is usually considered in order to be able to apply an efficient decomposition algorithm. For instance, the state equations or the underlying nonanticipativity constraints are dualized (via Lagrangian relaxation) so that the resulting problem decomposes by node (in the scenario tree) or by scenario, respectively. See \cite{romisch2001multistage} or \cite[Chapter 3]{vigerske2012decomposition} for more details and references. A special attention has been given to the case of MSIPs with binary state variables in \cite{alonso2003bfc} and \cite{zou2016nested} where a branch-and-fix coordination scheme and an extension  of a nested decomposition algorithm to MSIP are proposed, respectively. On the other hand, some recent works have been devoted to scenario (group) decomposition based bounding schemes for general MSIPs \cite{boland2016scenario,maggioni2014monotonic,sandikci2014scalable,zenarosa2014scenario}.

Alternatively, \MSIP s can be approximated using \emph{discrete decision rules}. The literature on such an approach is limited, and the most of the existing works is in the realm of (adjustable) robust optimization. Integer decision rules are used in \cite{bertsimas2007adaptability}, while binary decision rules are used in \cite{bertsimas2014binary} and \cite{bertsimas2015design} (specifically, piecewise constant binary decision rules and piecewise linear binary decision rules, respectively). Discrete decision rules are combined with the sampling ideas in \cite{bertsimas2007adaptability} and \cite{bertsimas2015design} for chance constrained and robust optimization problems, respectively, where the given dynamic problem is first transformed into a static problem through decision rules which typically has infinitely-many constraints, and then solved via a scenario counterpart with a finite number of constraints obtained through sampling of the uncertain parameters. 
}

\exclude{
In contrast to multistage problems, two-stage stochastic problems are not nearly as hard. There has been a significant
progress for both two-stage stochastic linear and integer programs, especially in the last decade. Decomposition
approaches and sampling techniques appear to be very efficient for large-scale problems. For an extensive review of the
existing solution methods, we refer the reader to \cite[Section 1.3]{bodur2015valid}. This paper is mainly motivated by
these developments in two-stage stochastic programming.}


The rest of this paper is organized as follows. Section \ref{sec:primalLDR} defines the \MSLP, reviews the static LDR
approach, and presents the proposed two-stage LDR approach, including discussion
of how to solve the approximate problem and obtain statistical upper bounds on the original \MSLP. 
Section \ref{sec:dualLDR} conducts a similar analysis for the dual of an \MSLP, yielding an approach for finding
statistical lower bounds on an \MSLP. We present illustrative {\cred applications} in {\cred Sections
\ref{sec:invplanning} and \ref{sec:capexp}}, and make concluding remarks in Section \ref{sec:conc}.


\section{Primal two-stage linear decision rules}
\label{sec:primalLDR}

We formulate an \MSLP \ with $T \geq 2$ stages as follows, where throughout the paper, for integers $a \leq b$,
$[a,b] := \{a,a+1,\ldots,b\}$ and $[b] := \{1,\ldots,b\}$:
\bsubeq
\label{eqs:MSLP}
\begin{alignat}{2} 
 \min_{\xvar,\svar} \ \ & \Exp \Big[ \sum_{t \in [\Tpar]} \cpar_t(\xit)^\top \xvar_t(\xit) + \hpar_t(\xit)^\top \svar_t(\xit) \Big] \label{eq:MSLP_obj} \\ 
		\text{s.t.} \ \ 
		& \Apar_t(\xit) \svar_t(\xit) + \Bpar_t(\xit) \svar_{t-1}(\xitmone) + \Cpar_t(\xit) \xvar_t(\xit) =
		\bpar_t(\xit), 
		 \quad && t \in [\Tpar], \ \allxi, \label{eq:MSLP_constState}  \\ 
		& \new{(\xvar_t(\xit),\svar_t(\xit)) \in \Xset_t(\xit),} \quad && t \in [\Tpar], \  \allxi \label{eq:MSLP_constOthers} 
\end{alignat}
\esubeq
\new{where for $t \in [\Tpar]$}
\[ \new{\Xset_t(\xit) := \{   \xvar_t \in \R^{\pt}, \ \svar_t \in \R^{\qt} : \Dpar_t(\xit) \svar_t + \Gpar_t(\xit) \xvar_t \geq
\dpar_t(\xit) \} .} \]
Here, $\{\xisubt\}_{t=1}^T$ is a stochastic
process with probability distribution $\mathbb{P}$ and support $\xisupp$, where $\xipar_1 = 1$ for all $\xi \in \xisupp$
(i.e., data in stage $1$ is deterministic), $\xipar_r$ is a random vector taking values in $\R^{\ksubs}$ for $r \in [2,T]$, and $\xit :=
(\xipar_1,\hdots,\xisubt)$ for $t \in [\Tpar]$. {\cred Letting $\ell_1 = 1$,} we denote $\kt := \sum_{\rpar=1}^t \ksubs$ for $t \in
[T]$.  
The $\svar$ and $\xvar$ variables are referred to as {\it state} and {\it recourse} variables, respectively. Similarly,
\eqref{eq:MSLP_constState} and \eqref{eq:MSLP_constOthers} are referred to as {\it state equations} and {\it recourse 
constraints}, respectively. The
objective is to minimize the expected total cost.  
\new{The functions $\bpar_t:\R^{\kt} \rightarrow \R^{\mt}$, $\dpar_t:\R^{\kt} \rightarrow \R^{\nt}$,
$\Apar_t:\R^{\kt} \rightarrow \R^{\mt \times \qt}$, $\Bpar_t:\R^{\kt} \rightarrow \R^{\mt \times q_{t-1}}$,
$\Cpar_t:\R^{\kt} \rightarrow \R^{\mt \times \pt}$, $\Dpar_t:\R^{\kt} \rightarrow \R^{\nt\times\qt}$, and
$\Gpar_t:\R^{\kt} \rightarrow \R^{\nt \times \pt}$ define the random coefficients as a function of $\xit$. Frequently in
the literature, these are assumed to be affine functions of $\xit$, but we will not need this assumption in this work.
In \eqref{eq:MSLP_constState} for $t=1$, we adopt the convention that $\svar_{0}(\xipar^0) = 0$.}
The constraints are required to be
almost surely satisfied with respect to the distribution of the stochastic process, denoted by ``$\allxi$" 
We note that any \MSLP \ can be brought into the form of \eqref{eqs:MSLP} by introducing additional variables and constraints. 
Throughout the paper, we assume that  \eqref{eqs:MSLP} is feasible and has an optimal solution, and we denote its
optimal objective value as $\zopt$. 

\subsection{Static linear decision rules}

A tractable approximation of \MSLP \ can be obtained by restricting the decision policies to a certain form, i.e., by
restricting the decisions to be a special function of the uncertain parameters. A linear decision rule is 
a policy in which the decisions at each stage $t$ are restricted to be a linear function of the observed random variables
$\xit$ up that stage. We refer to the policies in which all the decisions are required to follow an LDR as \emph{static LDR policies}. Specifically, a static LDR policy has the form:
\bsubeq
\label{eqs:LDRxands}
\begin{align}
& \svar_t(\xit) = {\cred \betat \basis_{t}(\xit)},  \label{eq:LDRs} \\
& \xvar_t(\xit) = {\cred \thetat \basis_{t}(\xit)},   \label{eq:LDRx}  
\end{align} 
\esubeq
where {\cred $\thetat \in \R^{\pt \times \Kt}$ and $\betat \in \R^{\qt \times \Kt}$} are free parameters of the LDR, and
{\cred $\basis_{t}(\xit) = (\basis_{t1}(\xit),\ldots,\basis_{t\Kt}(\xit)):
\R^{\kpar^t} \ra \R^{\Kt}$} for all $t \in [\Tpar]$ \new{is a vector of given} {\it LDR basis functions}. We refer to the
{\cred $\thetavar$} and $\betavar$ variables as the \emph{LDR variables}. We assume $\Kpar_1 = 1$ and $\basis_{t1}(\xipar^t) \equiv 1$ for {\cred all}
$t \in [T]$. 
Often, the basis functions are the
uncertain parameters themselves, i.e., $\Kt = \kt$ and $\basis_{tk}(\xit) = (\xit)_k$, where $(\xit)_k$ denotes the
$k^{\text{th}}$ component of $\xit$ vector. In this case, we refer to the basis functions as the {\it standard basis
functions}. Note that the convention $\xi_1 \equiv 1$ implies that the decisions made at stage $t$ are actually {\it affine} functions of
the random variables $(\xi_2,\ldots,\xi_t)$. \new{Finally, for notational convenience we adopt the convention that $\basis_0 \equiv 0$, so that any
term involving $\basis_0$ disappears.}

Substituting the LDRs of the form \eqref{eqs:LDRxands} into \MSLP \ given in \eqref{eqs:MSLP} yields the following approximation of \MSLP, which we call \LDRprimal:
\begin{alignat}{2} 
\min_{\thetavar,\betavar} \ \ & \Exp \Big[ \sum_{t \in [\Tpar]} \cpar_t(\xit)^\top \thetat \basis_{t}(\xit)
+ \hpar_t(\xit)^\top \betat \basis_t(\xit)  \Big]  \label{eq:ldr} \\ 
		\text{s.t.} \ \ 
		&  \Apar_t(\xit) \betat \basis_t(\xit) + \Cpar_t(\xit) \thetat \basis_t(\xit) +  \Bpar_t(\xit) \betavar_{t-1} \basis_{t-1}(\xitmone) = \bpar_t(\xit),
		\quad  && t \in [\Tpar], \ \allxi,  \nonumber \\ 
		&  \Dpar_t(\xit) \betat \basis_t(\xit) + \Gpar_t(\xit) \thetat \basis_t(\xit)  \geq
		\dpar_t(\xit), \quad && t \in [\Tpar], \ \allxi,  \nonumber \\ 
		& \thetat \in \R^{\pt \times \Kt}, \ \betat \in \R^{\qt \times \Kt}, \quad && t \in [\Tpar]. \nonumber
\end{alignat}
We let $\zuldr$ denote the optimal value of \LDRprimal, where 
here and elsewhere, we adopt the convention that if a minimization (maximization) problem is infeasible, the associated optimal value
is defined to be $+\infty$ ($-\infty$).
Note that all the decision variables are deterministic, i.e., they have to be determined before observing any random
outcomes, and hence this problem is a static problem. \LDRprimal \  is a semi-infinite program having infinitely many
constraints. It is observed in \cite{shapiro2005complexity} (see also \cite{chen2008linear,kuhn2011primal}) that \LDRprimal \ can be
reformulated as a linear program (LP) using robust optimization techniques under the following assumptions: 
\BI
\setlength{\itemindent}{0.2cm}
\I[A1.] The standard basis functions are used.
\I[A2.] For all $t \in [\Tpar]$, the constraint matrices, $\Apar_t,\Bpar_t,\Cpar_t,\Dpar_t,\Gpar_t$,  are independent of
the random vector $\xiT$, \new{and $\bpar_t(\xit)$ and $\dpar_t(\xit)$ are affine functions of $\xit$.}
\I[A3.] The support, $\Xi$, is a nonempty compact polyhedron.
\EI
The LP reformulation has constraints of the form $(\betavar,{\cred \thetavar},w) \in \BTset$, where $w$ are auxiliary variables
and $\BTset$ is an explicitly given polyhedron.
The size of this LP scales well (typically grows only quadratically) with the number of stages $T$. Moreover, the
LP does not require any discretization of $\mathbb{P}$ (e.g., by Monte Carlo sampling), and instead only uses a polyhedral description of $\Xi$
and the second order moment matrix of the random variables.  These results have been generalized in \cite{georghiou2015generalized} to the case of
conic support, where {\cred A3} is replaced by an assumption that $\Xi$ is described by a finite set of conic inequalities, in which case \LDRprimal \ \eqref{eq:ldr} is reformulated as a conic program.

As \LDRprimal \ \eqref{eq:ldr} is a restriction of \MSLP, it provides an upper bound to \MSLP. However, the benefit of tractability
comes at the expense of loss of optimality. That is, the obtained upper bound can be substantially far from the optimal
value of \MSLP.  Indeed, for 2SLPs, the optimal recourse
decisions are very rarely linear in the random variables, but there always exists an optimal  \emph{piecewise linear} decision rule \cite{garstka1974decision}. 

\subsection{Two-stage linear decision rules}

We propose \emph{two-stage LDRs} which yield upper bounds to \MSLP \ that cannot be worse than the ones obtained by
\LDRprimal \ \eqref{eq:ldr}.  The key idea is to apply \new{an} LDR only on the state variables to obtain a {\it two-stage} approximation of
\MSLP, rather than a static approximation. Substituting the LDR of the form \eqref{eq:LDRs} into the \MSLP \ given in \eqref{eqs:MSLP} yields
\begin{alignat}{2} 
\min_{\xvar,\betavar} \ \ & \sum_{t \in [\Tpar]} \Exp \big[ \cpar_t(\xit)^\top \xvar_t(\xit) \big] +  \sum_{t \in
[\Tpar]}   \Exp \big[ \hpar_t(\xit)^\top \betat \basis_t(\xit) \big] \label{eq:ldr2s} \\ 
		\text{s.t.} \ \ 
		& \Apar_t(\xit) \betat \basis_{t}(\xit) + \Cpar_t(\xit) \xvar_t(\xit) + \Bpar_t(\xit) \betavar_{t-1} \basis_{t-1}(\xitmone)   = \bpar_t(\xit), \quad && 
		 t \in [\Tpar], \ \allxi, \nonumber \\ 
		& \Dpar_t(\xit) \betat \basis_{t}(\xit) + \Gpar_t(\xit) \xvar_t(\xit) \geq \dpar_t(\xit),  &&
		 t \in [\Tpar], \ \allxi, \nonumber \\
		& \xvar_t(\xit) \in \R^{\pt},  &&  t \in [\Tpar], \ \allxi, \nonumber \\
		& \betat \in \R^{\qt \times \Kt},  &&  t \in [\Tpar]. \nonumber
\end{alignat}
We denote this problem as \PartialLDRprimal \ and the optimal value of this problem as $\zuldrts$.
This problem is a 2SLP, which
can be equivalently written as follows, where we drop dependence of the first-stage variables on $\xipar^1\equiv 1$ and use
$\betavar_{1} = \svar_1(\xipar^1) = \basis_{1}(\xipar^1) \betavar_{1}$, 
\begin{align*} 
\zuldrts  = \ \min_{\xvar_1,\betavar} \ \ & \cpar_1^\top \xvar_1 +  \sum_{t \in [\Tpar]}  \Exp \big[ 
\hpar_t(\xit)^\top \betat \basis_t(\xit) \big] +  \Exp [ \Ffnc(\betavar,\xiT) ] \\ 
		\text{s.t.} \ \ 
		& \Apar_1 \betavar_{1} + \Cpar_1 \xvar_1 = \bpar_1,  \\ 
		& \Dpar_1 \betavar_{1} + \Gpar_1 \xvar_1 \geq \dpar_1,  \\
		& \xvar_1 \in \R^{\ppar_1}, \\ 
		& \betat \in \R^{\qt \times \Kt}, \ t \in [\Tpar],
\end{align*}
where $\Ffnc(\betavar,\xiT) := \sum_{t \in [2,\Tpar]} \ffnc_t(\betavar,\xit)$ and for $t \in [2,\Tpar]$,
\bsubeq
\label{eq:saasubprob}
\begin{alignat}{2} 
	 \ffnc_t(\betavar,\xit) := \min_{\xvar_t} \ \ & \cpar_t(\xit)^\top \xvar_t \\ 
		 \text{s.t.} \ \ & 
		 \Cpar_t(\xit) \xvar_t = \bpar_t(\xit) - \Apar_t(\xit) \betat \basis_{t}(\xit) - \Bpar_t(\xit) \betavar_{t-1}  \basis_{t-1}(\xitmone), \quad  &&\label{eq:Ffnc_cons1}  \\ 
		& \Gpar_t(\xit) \xvar_t \geq \dpar_t(\xit) - \Dpar_t(\xit) \betat \basis_{t}(\xit), \quad &&\label{eq:Ffnc_cons2}   \\
		& \xvar_t \in \R^{\pt}. \quad &&  \label{eq:Ffnc_cons3} 
\end{alignat}
\esubeq

The following proposition, immediate from the definitions of the associated problems, summarizes the relationship
between the optimal values of \MSLP, \LDRprimal \ \eqref{eq:ldr}, and \PartialLDRprimal \ \eqref{eq:ldr2s}.
\begin{prop}
The following inequalities hold:
\[  \zopt \leq \zuldrts \leq \zuldr . \] 
\end{prop}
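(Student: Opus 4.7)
The plan is to establish both inequalities by exhibiting, for each problem in the chain, a feasibility-and-objective-preserving map from the more restricted problem to the less restricted one. The proposition is essentially a tautological consequence of the fact that each problem adds restrictions on the allowable class of policies, so no real difficulty is anticipated; the only care needed is to verify that the LDR conventions (in particular $\basis_1 \equiv 1$, $\xipar_1 \equiv 1$, and $\basis_0 \equiv 0$) align across the three formulations.

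For the right-hand inequality $\zuldrts \leq \zuldr$, I would take any feasible $(\thetavar,\betavar)$ for \LDRprimal \ \eqref{eq:ldr} and construct a feasible solution $(\xvar,\betavar)$ for \PartialLDRprimal \ \eqref{eq:ldr2s} by defining $\xvar_t(\xit) := \thetat \basis_t(\xit)$ for each $t \in [\Tpar]$ and each $\xit$, while keeping the $\betavar$ variables unchanged. The state equations \eqref{eq:MSLP_constState} and recourse inequalities \eqref{eq:MSLP_constOthers} in \PartialLDRprimal \ are then exactly the constraints already satisfied by $(\thetavar,\betavar)$ in \LDRprimal, so feasibility is preserved. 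The objective values agree term by term because $\cpar_t(\xit)^\top \xvar_t(\xit) = \cpar_t(\xit)^\top \thetat \basis_t(\xit)$ pointwise. Taking the infimum of the right-hand side over feasible $(\thetavar,\betavar)$ gives $\zuldrts \leq \zuldr$.

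For the left-hand inequality $\zopt \leq \zuldrts$, I would take any feasible $(\xvar,\betavar)$ for \PartialLDRprimal \ and construct a feasible policy $(\xvar,\svar)$ for \MSLP \ \eqref{eqs:MSLP} by setting $\svar_t(\xit) := \betat \basis_t(\xit)$, a well-defined measurable function of $\xit$ (hence nonanticipative), and keeping $\xvar_t(\xit)$ unchanged. Substituting back into \eqref{eq:MSLP_constState}--\eqref{eq:MSLP_constOthers} yields precisely the $\allxi$ constraints of \PartialLDRprimal, and the objective contributions match term by term, so $\zopt \leq \zuldrts$.

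The only subtle step is handling the boundary conventions: for $t=1$, we have $\svar_0(\xipar^0) = 0$ and $\basis_0 \equiv 0$, so the term $\Bpar_1(\xipar^1) \svar_0$ vanishes on both sides of the correspondence; at $t=1$, $\basis_1(\xipar^1) = 1$ ensures $\svar_1 = \betavar_1$ consistently with the single-scenario first stage. Once these are checked, both inequalities follow immediately, and chaining them gives $\zopt \leq \zuldrts \leq \zuldr$, completing the proof. No nontrivial obstacle is expected.
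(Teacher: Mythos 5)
Your proposal is correct and matches the paper's reasoning: the paper treats this proposition as immediate from the definitions, since \PartialLDRprimal\ is obtained from \MSLP\ by restricting the state variables to an LDR, and \LDRprimal\ further restricts the recourse variables, which is exactly the chain of feasibility-preserving, objective-preserving substitutions you spell out. Your extra care with the boundary conventions ($\basis_0 \equiv 0$, $\xipar_1 \equiv 1$) is fine but not needed beyond what the paper already assumes.
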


The difference between $\zuldr$ and $\zuldrts$ can be arbitrarily large. In particular, an example is given in
\cite{garstka1974decision} of a 2SLP having relatively complete recourse for which  
\LDRprimal  \ \eqref{eq:ldr} is infeasible ($\zuldr = \infty$), while the 2SLP (and hence \PartialLDRprimal,
\eqref{eq:ldr2s}) is feasible. 

Unfortunately, the techniques used to derive a static approximation of \LDRprimal \ \eqref{eq:ldr} do not yield an efficiently computable
reformulation of \PartialLDRprimal \ \eqref{eq:ldr2s}, even under assumptions {\cred A1-A3}. 
%
In the next section, we review approaches for obtaining an approximate solution, say $\hat{\beta}$, of
\PartialLDRprimal \ \eqref{eq:ldr2s}. Then, in Section \ref{sec:ubest} we discuss techniques for \new{obtaining a
feasible policy (and hence estimating an upper bound on $\zopt$)} using such a
solution.

\subsection{Approximate solution of \PartialLDRprimal}
\label{sec:SAAprimal}

There is a huge literature on (approximately) solving 2SLP problems. In this section, we present a brief overview of
relevant approaches, with a focus on identifying the required assumptions. We refer the reader to \cite{shapiro2014lectures} for more details. 

A common approach for approximately solving a 2SLP is  {\it sample average approximation}, in which  $\mathbb{P}$ is
approximated by a discrete probability measure $\hat{\mathbb{P}}$ that assigns positive weights only to a finite
(relatively small) number of realizations of $\xiT$ which are
called \emph{scenarios}. In this way, the intractable expectation term is replaced with a sum. Scenarios may be constructed by a variety of techniques, such as Monte Carlo, 
quasi-Monte Carlo, and Latin hypercube sampling (e.g.,
\cite{tito:siopt08,koivu:05,lind:saacomp,pennanen:09,shapiro2000rate}). For the purpose of this paper, we consider
only the conceptually simplest case in which scenarios are generated via independent Monte Carlo sampling. 

Let $\xiT_j, j=1,\ldots,N$, be an independent and identically distributed (i.i.d.) random sample of the random vector 
$\xiT$, and define the sample average approximation (SAA) problem:
\bsubeq
\label{prim:saa}
\begin{align} 
\zusaa := \min_{\xvar_1,\betavar} \ \ & \cpar_1^\top \xvar_1 +  \sum_{t \in [\Tpar]}
\Exp \big[  \hpar_t(\xit)^\top \beta_t \basis_{t}(\xit)\big]  + \frac{1}{N} \sum_{j \in [N]}
\Ffnc(\betavar,\xiT_j) \label{primsaa:obj} \\ 
		\text{s.t.} \ \ 
		& \Apar_1 \betavar_{1} + \Cpar_1 \xvar_1 = \bpar_1,  \\ 
		& \Dpar_1 \betavar_{1} + \Gpar_1 \xvar_1 \geq \dpar_1,  \\
		& \xvar_1 \in \R^{\ppar_1}, \\ 
		& \betat \in \R^{\qt \times \Kt}, \ t \in [\Tpar].
\end{align}
\esubeq
Once the sample is fixed, the SAA problem can be solved by any approach for solving the above, now deterministic,
problem. In particular the $L$-shape decomposition algorithm \cite{van1969shaped} or a regularized variant
\cite{level-original,Rusz:1986} can be applied, with the further
advantage that the subproblem obtained with fixed $\hbetavar$ decomposes by both scenario and stage due to the relationship
$\Ffnc(\betavar,\xiT) = \sum_{t \in [2,\Tpar]} \ffnc_t(\betavar,\xit)$.
\new{The coefficients on $\betat$ in the objective function \eqref{primsaa:obj}  can also be estimated by sampling in
case the terms $\Exp[\hpar_{tj}(\xit) \basis_{tk}(\xit)]$ cannot be computed efficiently.}

If \new{(i)} there exists a $\bar{\beta}$ such that $\Exp[\Ffnc(\beta,\xiT)] < \infty$ for all $\beta$ in a neighborhood of
$\bar{\beta}$, \new{and (ii) the set of optimal solutions to \PartialLDRprimal  \ \eqref{eq:ldr2s} is nonempty and bounded}, then because $\Ffnc(\cdot,\xiT)$ is a convex function for all $\xiT \in \Xi$, Theorem 5.4 of
\cite{shapiro2014lectures} applies and implies that
\begin{equation}
\label{eq:cons}
\zusaa \rightarrow \zuldrts \quad \text{with probability 1 as }   N \rightarrow \infty 
\end{equation}
and also that the set of optimal solutions to \eqref{prim:saa} converges to the set of optimal solutions of
\PartialLDRprimal  \ \eqref{eq:ldr2s}. 

\new{Stronger results  on the convergence of $\zusaa$ to $\zuldrts$ require additional assumptions. For example, a central limit theorem result (e.g., Theorem 5.7 in
 \cite{shapiro2014lectures}) can
be obtained under the assumptions that $\Exp[\Ffnc(\bar{\beta},\xiT)^2] < \infty$ for some $\bar{\beta}$, and that there
exists a measurable function $f:\Xi \rightarrow \R_+$ such that $\Exp[f(\xiT)^2]$ is finite and
\[ | \Ffnc(\beta,\xiT) -  \Ffnc(\beta',\xiT) | \leq f(\xiT) \| \beta - \beta' \| \]
for all $\beta,\beta'$ and almost every $\xiT \in \Xi$.
Bounds on the sample size required for \eqref{prim:saa} to yield an
$\epsilon$-optimal solution to \PartialLDRprimal \ \eqref{eq:ldr2s} with probability at least $1-\alpha$
are derived in \cite{shapiro:mp08,shapiro2014lectures,shapiro2000rate,shapiro2005complexity}. These bounds scale linearly with the
dimension of the first-stage variables, $\beta$ and $x_1$ in this case.
Dependence on the confidence $\alpha$ is $\ln(1/\alpha)$ so that high confidence can be achieved, but the dependence on $\epsilon$ is $O(1/\epsilon^2)$, which is
why sampling is limited to obtaining ``medium accuracy'' solutions \cite{shapiro2005complexity}. 
These stronger results all require, at least, that  $\Ffnc(\beta,\xiT)$  is finite for every first-stage solution 
$\beta$ and almost every  $\xiT \in \Xi$. 

In order to facilitate the solution of \PartialLDRprimal \ \eqref{eq:ldr2s} via a sampling procedure, we also consider
adding additional constraints $\betavar \in \Bset \subseteq \R^{\dimp}$, where $\dimp := \sum_{t \in [T]} \qpar_t\Kpar_t$, to
\PartialLDRprimal \ \eqref{eq:ldr2s} (and to the SAA \eqref{prim:saa}).  For example, some of the convergence results
require the first-stage feasible region to be bounded, in which case we may define $\Bset$ by limiting the absolute
value of each component of $\betavar$ to be less than a large constant. If the constant is not chosen large enough, then
this may degrade the quality of the solution obtained, but this could be detected after solving the SAA problem by
determining if any of the bound constraints are tight. More significantly, most of the SAA convergence results require
the following relatively complete recourse assumption on the set $\Bset$: 
}
\begin{assumption}
\label{assmp:PartialRecourse}
For all $\betavar \in \Bset$, $\Ffnc(\betavar,\xiT) < +\infty$ $\allxi$.
\end{assumption}
If \PartialLDRprimal \ \eqref{eq:ldr2s} already has relatively complete recourse, then we can take $\Bset = \R^{\dimp}$, and hence impose no additional constraints.
Otherwise, adding the constraints $\beta \in \Bset$ has the potential to make the approximation more conservative. Derivation of a
set $\Bset$ that satisfies Assumption \ref{assmp:PartialRecourse} is a difficult task in general. However, relatively
complete recourse can often be achieved by appropriate modeling, e.g., by introducing ``artificial'' variables that allow violation 
of a constraint, where the violation amount is then penalized in the objective function. Derivation of a set $\Bset$
satisfying Assumption \ref{assmp:PartialRecourse} may then be possible using ad hoc techniques. \new{We provide an
example of how this can be done in an inventory planning problem in Section \ref{sec:invimpl}.}
Another
possibility, if assumptions {\cred A1-A3} hold, is to use the robust optimization techniques used in
\cite{kuhn2011primal,shapiro2005complexity} to derive a tractable set $\BTset$ such that $(\beta,\thetavar)$ is feasible to
\LDRprimal \ \eqref{eq:ldr} if and only if there are values of auxiliary variables $w$ such that $(\beta,\thetavar,w) \in \BTset$. Then, $\Bset = \projx_{\beta}(\BTset)$ would satisfy Assumption
\ref{assmp:PartialRecourse}. This construction of $\Bset$ is more conservative than necessary, because it restricts 
$\beta$ to values for which there is also a $\thetavar$ that makes the static
LDR policy defined in \eqref{eqs:LDRxands} feasible $\allxi$.  However, the resulting
policy could still potentially be better (and for sure would not be worse) than the static LDR policy obtained from
\LDRprimal \ \eqref{eq:ldr},
since enforcing $\beta \in \projx_{\beta}(\BTset)$ would not require the recourse decisions to follow an LDR policy
(it only requires existence of a feasible LDR policy).



\PartialLDRprimal \ \eqref{eq:ldr2s}, with the additional constraints $\betavar \in \Bset$, can also be approximately solved by stochastic approximation \cite{robbinsmonro:51} or one of its
robust extensions, e.g., \cite{njls-sa:siopt09,polyakjuditsky:92}, when Assumption \ref{assmp:PartialRecourse} holds
\new{and $\Bset$ is bounded.}

Finally, we remark that if we cannot derive a set $\Bset$ satisfying Assumption \ref{assmp:PartialRecourse}, results about sampling-based approximation of chance-constrained programs derived in
\cite{campicalifscen} can be used to show that an optimal solution of the SAA problem \eqref{prim:saa} yields a 
policy that is feasible for a large fraction of the random outcomes. This has been previously used in
\cite{bertsimas2007adaptability,bertsimas2015design,vayanos2012} when using
sampling to approximately solve static approximations derived from finitely adaptable and piecewise-linear decision
rules. Although the two-stage LDR policy itself is not necessarily feasible $\allxi$~in this case, in the next section
we discuss how an approximate solution $\hbetavar$ could still be used to guide a feasible policy.

\subsection{Feasible policies and upper bounds on $\zopt$}
\label{sec:ubest}

Let $(\hxvar_1,\hbetavar)$ be an approximate first-stage solution to \PartialLDRprimal \ \eqref{eq:ldr2s}.  \new{We discuss how such a solution
can be used to obtain a feasible policy for the \MSLP \ \eqref{eqs:MSLP}, which in turn can be used to estimate an upper
bound on $\zopt$.
We consider two possibilities for obtaining such a policy, depending on whether or not a set $\Bset$ satisfying
Assumption  \ref{assmp:PartialRecourse} is used.} 

\new{We first consider the case that $\hbetavar \in \Bset$ for a set $\Bset$ satisfying Assumption \ref{assmp:PartialRecourse}. In this
case, $(\hxvar_1,\hbetavar)$ defines a feasible solution to \PartialLDRprimal \ \eqref{eq:ldr2s} and a feasible two-stage LDR policy for
\MSLP. 
In particular, at stage $t$, if the current history is $\xit$, the state variable decisions are given by using
$\hbetavar$ in the LDR \eqref{eq:LDRs} and the recourse decisions are obtained by solving \eqref{eq:saasubprob}, again substituting $\hbetavar$
for $\betavar$.}
\new{As this solution defines a feasible policy}, the expected cost of this solution provides an upper
bound on $\zuldrts$ and $\zopt$.
The expected cost of the policy defined by $(\hxvar_1,\hbetavar)$ can be estimated by generating an independent sample of $\xiT$, say
$\{\xiT_j\}_{j=1}^{N'}$ , and computing  
$$\cpar_1^\top \hxvar_1 +  \sum_{t \in [\Tpar]}   \Exp \big[ \hpar_t(\xit)^\top \hbetavar_t \basis_{t}(\xit)  \big]
 + \frac{1}{N'} \sum_{j \in [N']} \Ffnc(\hbetavar,\xiT_j) .$$ 
Because $\hbetavar$ is fixed in this evaluation step, it would generally be computationally feasible to use
$N' \gg N$.
The values $\Ffnc(\hbetavar,\xiT_j)$ for $j \in
[N']$ can also be used to construct a confidence interval on the objective value of $(\hxvar_1,\hbetavar)$, and hence
a statistical upper bound on $\zopt$.

We next consider the case when we do not know $\hbetavar \in \Bset$ for a set $\Bset$ satisfying Assumption \ref{assmp:PartialRecourse}, 
so that we do not know  a priori that \new{the two-stage LDR defined by $\hbetavar$ defines a feasible policy}. To construct a policy in this case, we make the following relatively complete recourse assumption for the
original problem \MSLP.
\begin{assumption}
\label{assmp:MSLPrecourse}
For all $\xiT \in \Xi$, and each $t \in [2,\Tpar]$, if the random vectors $$\{ (\svar_{r}(\xipar^{r}), \xvar_{r}(\xipar^{r}) \}_{r \in [t-1]}$$ satisfy the
constraints of \MSLP \ for $r \in [t-1]$, then there exists $(\svar_{t}, \xvar_{t})$ that
satisfies the constraints of \MSLP \ in stage $t$:
\begin{align*}
&  \Apar_t(\xit) \svar_t + \Cpar_t(\xit) \xvar_t =
		\bpar_t(\xit) - \Bpar_t(\xit) \svar_{t-1}(\xitmone), \\
& (\xvar_t,\svar_t) \in \Xset_t(\xit) . 
\end{align*}
\end{assumption}
In other words, this assumption states that in any stage $t$, for any value of the previous state variables
$\svar_{t-1}(\xitmone)$ that could be obtained from past realizations of the random outcomes and past feasible
decisions, there always exists a feasible set of decisions in the current stage (see e.g., \cite{higle2006multistage}).  

\new{Under Assumption \ref{assmp:MSLPrecourse}, we can implement} a policy which is guided by $\hbetavar$, which we refer to as a
{\it state-target tracking (STT) policy}.
Specifically, at stage $t=1$, we implement the solution $\laxvar_1 = \hxvar_1$ and $\lasvar_1 = \hbetavar_{1}$.
Then, for each stage $t \in [2,\Tpar]$, we first observe $\xipar_t$ (thus, we have $\xit$), 
and then  solve the problem (deterministic for this fixed $\xit$): 
\bsubeq
\begin{alignat}{1} 
	 \min_{\xvar_t,\svar_t} \ \ & \cpar_t(\xit)^\top \xvar_t + \hpar_t(\xit)^\top \svar_t  + \rho \| \svar_t -
	 \hbetavar_t \basis_t(\xit) \|  \\
		\text{s.t.} \ \ 
		& \Apar_t(\xit) \svar_t + \Cpar_t(\xit) \xvar_t = \bpar_t(\xit) - \Bpar_t(\xit) \lasvar_{t-1}(\xitmone),
		\nonumber \\ 
		& (\xvar_t,\svar_t) \in \Xset_t(\xit),
\end{alignat}
\esubeq
where  $\rho \geq 0$ is a parameter of the policy and $\| \cdot \|$ is any norm,
and let the {\cred optimal }solution be $\laxvar_t(\xit),\lasvar_t(\xit)$. For any $\xiT \in \Xi$, all problems in this sequence 
are feasible
when Assumption \ref{assmp:MSLPrecourse} holds, and hence this yields a feasible policy to MSLP. Observe that when $\rho =
0$, the policy reduces to a pure myopic policy that only considers the cost of decisions in each stage, without
considering the impact of $\svar_t$ on future costs. Using larger values of $\rho > 0$ has the effect of encouraging
the decisions to be made in a way that keeps the state close to what would have been achieved if we could exactly follow the LDR policy
defined by $\hbetavar$ on the state variables.  
The cost of the STT policy under a {\cred realization} $\xi$ of the stochastic process is 
$$\sum_{t \in [\Tpar]} \bigl(\cpar_t(\xit)^\top \laxvar_t(\xit) + \hpar_t(\xit)^\top \lasvar_t(\xit) \bigr).$$
The expected cost of the STT policy is an upper bound on the optimal value of \MSLP, and a confidence interval on this
expected cost can be obtained by simulation with independent replications. \new{We do not know an a priori upper bound
on the optimality gap between the expected cost of the STT policy and the optimal value $\zopt$. However,  the dual two-stage LDR discussed in Section \ref{sec:dualLDR} may be used to estimate a lower bound on $\zopt$, which can be used to
provide an a posteriori statistical bound on the optimality gap of the STT policy.} The value of the parameter $\rho$
can be selected by estimating the expected cost of the policy under different values of $\rho$ and choosing
the most promising value, or by using optimization via simulation techniques \cite{fu:ijoc02,hongnelson:wsc09}. \new{For
example, in our numerical experiments, we used a fixed relatively small sample ($N'=100$), and applied a
variant of a golden section algorithm  to find a value of $\rho$ that approximately minimizes the
estimated cost given by this sample. See Section \ref{sec:capimpl} for more details. Once the value of $\rho$ is chosen,
the expected cost of the resulting policy is  evaluated using a larger sample.}
Note that using the STT policy, even the decisions $\lasvar_t(\xit)$ may not necessarily have the
form of an LDR. Thus, simulating this policy yields an estimate of an upper bound on $\zopt$, but not necessarily
on $\zuldrts$.

\exclude{
Assumption \ref{assmp:MSLPrecourse} suggests that a feasible solution can be generated in a sequential fashion using a
one-step look-ahead strategy, as is
common, e.g., in approximate dynamic programming or reinforcement learning
\cite{bertsekas:adp,bertststits:ndp,powell:adp,suttonbarto:rl}.
Such an approach starts from the first stage and moves forward
the optimization horizon in each stage, assuming that the status of the system is updated as the uncertain
parameters have been revealed. As the stages proceed, the decisions in the current stage are computed based on the current
observed random outcome and value of the state variables.

}

\section{Dual two-stage linear decision rules}
\label{sec:dualLDR}
In this section, we apply \new{a} two-stage LDR to the dual of \MSLP, with the goal of obtaining lower bounds on the
optimal value of \MSLP. The dual of \MSLP, which we refer to as \DMSLP,  is the problem (see
\cite{eisner1975duality}):
\bsubeq
\label{eqs:DMSLP}
\begin{alignat} {2}
	 \max_{\Lvar,\Gvar} \ \ & \Exp \Big[ \sum_{t \in [\Tpar]} \bpar_t(\xit)^\top \Lvart + \dpar_t(\xit)^\top \Gvart \Big] \label{eq:DMSLP_obj} \\ 
		\text{s.t.} \ \ 
		& \Exp \Big[ \Bpar_{t+1}(\xitpone)^\top \Lvar_{t+1}(\xitpone) \Big | \xit \Big] \nonumber \\
		& \qquad +  \Apar_t(\xit)^\top \Lvart + \Dpar_t(\xit)^\top \Gvart =  \hpar_t(\xit), \quad &&  t \in [\Tpar], \ \allxi, \label{eq:DMSLP_con1} \\
		& \Cpar_t(\xit)^\top \Lvart + \Gpar_t(\xit)^\top \Gvart  =  \cpar_t(\xit), \quad &&  t \in [\Tpar], \ \allxi, \label{eq:DMSLP_con2} \\
		& \Gvart \geq 0, \quad &&  t \in [\Tpar], \ \allxi,  \label{eq:DMSLP_con4} \\
		& \Lvart \in \R^{\mt}, \ \Gvart \in \R^{\nt}, \quad &&  t \in [\Tpar], \ \allxi,  \label{eq:DMSLP_con5} 
\end{alignat}
\esubeq
where $\Bpar_{\Tpar+1}(\xipar^{\Tpar+1}) = 0$. For $t \in [T]$, the dual decisions $\Lvar_t(\cdot)$ (corresponding to constraints \eqref{eq:MSLP_constState}
in \MSLP) and $\Gvar_t(\cdot)$ (corresponding to constraints \eqref{eq:MSLP_constOthers} in \MSLP) are functions of the data $\xit$ observed up to
stage $t$. 
Weak duality holds for \MSLP \ and \DMSLP, i.e., the optimal objective value of \DMSLP \ provides a lower bound on $\zopt$. Moreover, under some conditions, strong duality holds (i.e., optimal value of \DMSLP \ equals $\zopt$) 
\cite{eisner1975duality}, although we only require weak duality.

\subsection{Static linear decision rules}

In \cite{kuhn2011primal} it has been proposed to use \new{a} static LDR to obtain a tractable approximation of \DMSLP, and thus
an efficiently computable lower bound on $\zopt$. Specifically, the idea is to require all the dual decisions to be
\new{an} LDR, i.e.,  
\bsubeq
\label{eqs:LDR_LandG}
\begin{align}
& \Lvar_t(\xit) = {\cred \alphat \basis_{t}(\xit)},  \label{eq:LDR_L} \\
& \Gvar_t(\xit) = {\cred \pit \basis_{t}(\xit)},  \label{eq:LDR_G} 
\end{align} 
\esubeq
where  $\alphat \in \R^{\mt \times \Kt}, \pit \in \R^{\nt \times \Kt}$, for all $t \in [\Tpar]$ are the parameters of the
decision rule. Imposing \eqref{eqs:LDR_LandG} yields the following static approximation of  \DMSLP, which we call \LDRdual:
\begin{alignat} {2}
	 \max_{\alphavar,\pivar} \ \ & \Exp \Big[ \sum_{t \in [\Tpar]}  \bpar_t(\xit)^\top \alphat \basis_{t}(\xit) 
	 + \dpar_t(\xit)^\top \pit \basis_t(\xit)  \Big]  \label{eq:dldr} \\ 
		\text{s.t.} \ \ 
		& \Exp \Big[ \Bpar_{t+1}(\xitpone)^\top \alphavar_{t+1} \basis_{t+1}(\xitpone) \Big | \xit
		\Big] \nonumber \\ 
		& \ \ \ +  \Apar_t(\xit)^\top \alphat \basis_{t}(\xit) + \Dpar_t(\xit)^\top \pit \basis_{t}(\xit) =
		\hpar_t(\xit), \ \ \ &&  t \in [\Tpar], \ \allxi, \nonumber \\
		&    \Cpar_t(\xit)^\top \alphat \basis_{t}(\xit)+ \Gpar_t(\xit)^\top \pit \basis_{t}(\xit)  =
		\cpar_t(\xit), \quad &&  t \in [\Tpar], \ \allxi, \nonumber \\
		&  \pit \basis_{t}(\xit) \geq 0, \quad &&  t \in [\Tpar], \ \allxi, \nonumber \\
		& \alphat \in \R^{\mt \times \Kt}, \ \pit \in \R^{\nt \times \Kt}, \quad && t \in [\Tpar]. \nonumber
\end{alignat}
We refer to the optimal value of \LDRdual\ as $\zlldr$.
The semi-infinite program \LDRdual \ \eqref{eq:dldr} can be reformulated as an efficiently
solvable LP \new{if assumptions A1-A3 stated in Section \ref{sec:primalLDR} hold, the problem \MSLP \ is
strictly feasible, and the following additional assumption holds
 \cite{kuhn2011primal}:}
\BI
\I[A4.] {\cred The conditional expectation $\Exp(\xiT | \xit)$ is  almost surely linear in $\xit$ for all $t \in [\Tpar]$
(e.g., this occurs when $\{ \xisubt\}_{t \in [\Tpar]}$ are mutually independent, known as \emph{stage-wise independence}).}
\EI
If assumption {\cred A3} is replaced by an assumption that $\Xi$ is described by conic inequalities, \LDRdual \ \eqref{eq:dldr} can be reformulated as a conic program \cite{georghiou2015generalized}.

\subsection{Two-stage linear decision rules}

Examining the structure of \DMSLP, given in \eqref{eqs:DMSLP}, we observe that if the values $\Lvar_t(\xit)$ are fixed, then
\eqref{eqs:DMSLP} decomposes by stage.  We thus propose to apply \new{an} LDR only to the $\Lvar$ variables, leaving the
decision variables $\Gvar$ as recourse variables. Imposing the LDR of \eqref{eq:LDR_L}  
collapses \DMSLP \ into the following 2SLP, which
we refer to as \PartialLDRdual: 
\begin{align} 
\zlldrts := \max_{\Gvar_1,\alphavar} \ \ & \dpar_1^\top \Gvar_1 + \sum_{t \in [\Tpar]}  \Exp \big[  \bpar_t(\xit)^\top
\alphat \basis_{t}(\xit) \big]  + \Exp [ \Fhatfnc(\alphavar,\xiT) ]
	 \label{eq:dldr2s} \\ 
		\text{s.t.} \ \ 
		& \Exp \big[ \Bpar_2(\xipar^2)^\top \alphavar_2 \basis_{2}(\xipar^2) \big]  +
		\Apar_1^\top \alphavar_{1} + \Dpar_1^\top \Gvar_1 =  \hpar_1, \nonumber \\
		& \Cpar_1^\top \alphavar_{1} + \Gpar_1^\top \Gvar_1  =  \cpar_1, \nonumber \\
		& \Gvar_1 \in \R^{\npar_1}_+, \nonumber \\
		& \alphat \in \R^{\mt \times \Kt}, \quad  t \in [\Tpar],  \nonumber
\end{align}
where we have dropped the dependence on $\xipar_1 \equiv 1$ on the first-stage decision variables.
Here, $\Fhatfnc(\alphavar,\xiT)$ is the second-stage value function,
\beqn
\label{eq:Fhatseparated}
\Fhatfnc(\alphavar,\xiT) := \sum_{t \in [2,\Tpar]} \fhatfnc_t(\alphavar,\xit)
\eeqn
where for each $t \in [2,\Tpar]$ 
\bsubeq
\label{eqs:fhatfnc}
\begin{align} 
	 \fhatfnc_t(\alphavar,\xit) :=  \max_{\Gvar_t} \   & \dpar_t(\xit)^\top \Gvar_t \label{eq:fhatfnc_obj} \\ 
		\text{s.t.} \ 
		& \Dpar_t(\xit)^\top \Gvar_t =  \hpar_t(\xit) - \Apar_t(\xit)^\top \alphat \basis_{t}(\xit) \nonumber \\
		&  \hspace*{2cm} -  \Exp \Big[ \Bpar_{t+1}(\xitpone)^\top \alphavar_{t+1} \basis_{t+1}(\xitpone) \ \Big | \xit \Big], \label{eq:fhatfnc_cons1} \\
		& \Gpar_t(\xit)^\top \Gvar_t  =  \cpar_t(\xit) - \Cpar_t(\xit)^\top \alphat \basis_{t}(\xit), \label{eq:fhatfnc_cons2} \\
		& \Gvar_t \in \R_+^{\nt}. \label{eq:fhatfnc_cons5}		
\end{align}
\esubeq

The following proposition, immediate from the definitions of the associated problems, summarizes the relationship
between the optimal values of \MSLP, \LDRdual \ \eqref{eq:dldr}, and \PartialLDRdual \eqref{eq:dldr2s} .
\begin{prop}
The following inequalities hold:
\[  z^{MSLP} \geq \zlldrts \geq  \zlldr . \] 
\end{prop}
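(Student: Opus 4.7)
The proof should mirror the analogous primal proposition and, as in that case, follow directly from the fact that each problem in the chain is obtained from the next by imposing additional restrictions on the feasible region. The plan is to show two inclusions: (i) every feasible solution of \PartialLDRdual \eqref{eq:dldr2s} induces a feasible solution of \DMSLP \eqref{eqs:DMSLP} with the same objective value, and (ii) every feasible solution of \LDRdual \eqref{eq:dldr} induces a feasible solution of \PartialLDRdual with the same objective value. Combined with weak duality between \MSLP and \DMSLP (invoked in the excerpt just before Section \ref{sec:dualLDR}), these inclusions yield the chain of inequalities.

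For the first inequality $\zopt \geq \zlldrts$, I would take an arbitrary feasible solution $(\Gvar_1, \alphavar)$ of \PartialLDRdual together with the associated recourse functions $\Gvar_t(\xit)$ attaining $\fhatfnc_t(\alphavar, \xit)$ in \eqref{eqs:fhatfnc} for $t \in [2,\Tpar]$. Defining $\Lvar_t(\xit) := \alphat \basis_t(\xit)$ for every $t \in [\Tpar]$, the state-equation constraint \eqref{eq:DMSLP_con1} becomes exactly \eqref{eq:fhatfnc_cons1} (after substituting the LDR form of $\Lvar$), the recourse-equation constraint \eqref{eq:DMSLP_con2} becomes \eqref{eq:fhatfnc_cons2}, and the sign constraint \eqref{eq:DMSLP_con4} holds by \eqref{eq:fhatfnc_cons5}. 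The objective of \DMSLP evaluated at this solution equals the objective of \PartialLDRdual at $(\Gvar_1,\alphavar)$ after applying the tower property of conditional expectation to the terms $\bpar_t(\xit)^\top \alphat \basis_t(\xit)$ and decomposing $\Fhatfnc$ as in \eqref{eq:Fhatseparated}. Hence $\zlldrts \leq z^{DMSLP}$, and weak duality gives $z^{DMSLP} \leq \zopt$.

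For the second inequality $\zlldrts \geq \zlldr$, I would take an arbitrary feasible solution $(\alphavar, \pivar)$ of \LDRdual and set $\Gvar_t(\xit) := \pit \basis_t(\xit)$ for each $t \in [\Tpar]$. The constraints of \LDRdual then directly certify that $\Gvar_t(\xit)$ satisfies \eqref{eq:fhatfnc_cons1}--\eqref{eq:fhatfnc_cons5} almost surely, so $(\Gvar_1, \alphavar)$ together with the recourse choice $\Gvar_t(\xit) = \pit \basis_t(\xit)$ is feasible for \PartialLDRdual, with identical objective value. Since this recourse choice is suboptimal in general for the inner maximization defining $\fhatfnc_t(\alphavar,\xit)$, the optimal value of \PartialLDRdual at $\alphavar$ can only be at least as large, so $\zlldrts \geq \zlldr$.

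There is no real obstacle here: as the analogous primal proposition is noted to be immediate from the definitions, the same applies in the dual. The only minor care needed is to check that the substituted functions $\Lvar_t(\xit) = \alphat \basis_t(\xit)$ and $\Gvar_t(\xit) = \pit \basis_t(\xit)$ are measurable in $\xit$ and that the relevant expectations exist, which both hold under the implicit integrability and measurability conditions assumed throughout the paper.
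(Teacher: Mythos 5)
Your argument is correct and is essentially the same reasoning the paper has in mind: the paper states the proposition as immediate from the definitions, since \PartialLDRdual\ is obtained from \DMSLP\ by restricting the $\Lvar$ variables to LDR form (so its optimal value, combined with weak duality, cannot exceed $\zopt$), and \LDRdual\ further restricts the $\Gvar$ variables, which you verify by the same feasibility-preserving substitutions. The only cosmetic point is that in the first step you need not assume the inner maxima $\fhatfnc_t(\alphavar,\xit)$ are attained; it is cleaner to regard \PartialLDRdual\ directly as the restriction of \DMSLP\ to LDR-form $\Lvar$ (the value-function rewriting changes nothing), which sidesteps any measurable-selection or attainment issue.
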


As \PartialLDRdual \ \eqref{eq:dldr2s} is a 2SLP, the discussion in Section \ref{sec:SAAprimal} of methods to
obtain an approximate solution to \PartialLDRprimal \ \eqref{eq:ldr2s} \ 
applies also to \PartialLDRdual \ \eqref{eq:dldr2s}. In particular, one possibility is to obtain an i.i.d. sample
$\{\xiT_j\}_{j=1}^N$ of $\xiT$ and solve the SAA problem:
\bsubeq
\label{dual:saa}
\begin{align} 
\zlsaa := \max_{\Gvar_1,\alphavar} \ \ & \dpar_1^\top \Gvar_1 + \sum_{t \in [\Tpar]} \Exp \big[  \bpar_t(\xit)^\top \alphat \basis_{t}(\xit)\big]  + \frac{1}{N} \sum_{j \in [N]}
	 \Fhatfnc(\alphavar,\xiT_j)  \\ 
		\text{s.t.} \ \ 
		& \Exp \big[  \Bpar_2(\xipar^2)^\top \alphavar_{2} \basis_{t}(\xipar^2) \big] +
		\Apar_1^\top \alphavar_{1} + \Dpar_1^\top \Gvar_1 =  \hpar_1, \label{dual:saaeq} \\
		& \Cpar_1^\top \alphavar_{1} + \Gpar_1^\top \Gvar_1  =  \cpar_1, \\
		& \Gvar_1 \in \R^{\npar_1}_+, \\
		& \alphat \in \R^{\mt \times \Kt}, \quad  t \in [\Tpar].
\end{align}
\esubeq
As in the primal, note that the SAA problem can be solved by decomposition algorithms as the second-stage problem decomposes by both
scenario and by stage due to the relationship \eqref{eq:Fhatseparated}. \new{The expected value coefficients in the objective and
constraints \eqref{dual:saaeq} may be further estimated by sampling in case they cannot be computed efficiently.}

\subsection{Obtaining lower bounds on $\zopt$}
\label{sec:duallb}

Next, we discuss how to use  
an approximate solution of \PartialLDRdual \  \eqref{eq:dldr2s}  
to estimate a lower bound on $\zopt$. As in the primal case, in order to assure that we obtain a two-stage LDR policy
that is feasible for all possible realizations,  we consider the possibility of adding a set of constraints 
$\alphavar \in \Aset \subseteq \R^{\dimd}$ to \PartialLDRdual \ \eqref{eq:dldr2s} and its SAA counterpart \eqref{dual:saa} where $\dimd :=
\sum_{t \in [\Tpar]} \Kt \mt$. The following assumption on $\Aset$ assures that the problem \PartialLDRdual\ \eqref{eq:dldr2s} has relatively
complete recourse when the constraints $\alphavar \in \Aset$ are enforced.

\begin{assumption}
\label{assmp:DualPartialRecourse}
For all $\alphavar \in \Aset$, $\Fhatfnc(\alphavar,\xiT) > -\infty$ $\allxi$.
\end{assumption}

The following assumption provides a sufficient condition under which the set $\Aset = \R^{\dimd}$ satisfies Assumption
\ref{assmp:DualPartialRecourse} (i.e., no additional constraints are necessary).

\begin{assumption}
\label{assmp:Boundedness}
The set {\cred $X_t(\xit)$} is bounded for all $t \in [\Tpar]$ and $\mathbb{P}$ almost all $\xiT \in \Xi$.
\end{assumption}
A special case of this assumption occurs when $\xvar$ and $\svar$ variables have explicit upper and lower bounds. An
important feature of this assumption is that the sets $X_t(\xit)$ are not required to be uniformly bounded. 
For example, bounds on the decision variables of the form $0 \leq \xvar_t(\xit) \leq M(\xit)$ (and similarly for $\svar$
variables), are sufficient for satisfying this assumption, even if $M(\xit)$ is not bounded over $\xiT \in \Xi$. 

\begin{prop}
If Assumption \ref{assmp:Boundedness} is satisfied, then $\Aset = \R^{\dimd}$ satisfies
Assumption \ref{assmp:DualPartialRecourse}.
\end{prop}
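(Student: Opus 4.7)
The plan is to reduce the statement to showing that, for each $t \in [2,\Tpar]$ and for $\mathbb{P}$-almost every $\xiT \in \Xi$, the LP $\fhatfnc_t(\alphavar,\xit)$ defined in \eqref{eqs:fhatfnc} has optimal value strictly greater than $-\infty$ for every $\alphavar \in \R^{\dimd}$. This reduction is immediate from \eqref{eq:Fhatseparated}. Since $\fhatfnc_t$ is a maximization LP, the only way its value can be $-\infty$ is if its feasible region is empty, so the task is to establish feasibility.

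The key step is to pass to the LP dual of \eqref{eqs:fhatfnc}. Taking dual variables $\mu \in \R^{\qt}$ for \eqref{eq:fhatfnc_cons1} and $\nu \in \R^{\pt}$ for \eqref{eq:fhatfnc_cons2}, the dual becomes
\[
\min_{\mu,\nu} \Bigl\{\, r_1(\alphavar,\xit)^\top \mu + r_2(\alphavar,\xit)^\top \nu \,:\, \Dpar_t(\xit)\mu + \Gpar_t(\xit)\nu \geq \dpar_t(\xit) \,\Bigr\},
\]
where $r_1,r_2$ are the right-hand sides of \eqref{eq:fhatfnc_cons1}--\eqref{eq:fhatfnc_cons2}. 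The crucial observation is that the feasible region of this dual, as a polyhedron in $(\mu,\nu)$, coincides exactly with $\Xset_t(\xit)$ under the identification $\mu \leftrightarrow \svar_t$ and $\nu \leftrightarrow \xvar_t$; in particular, the feasible region does not depend on $\alphavar$.

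I would then combine two facts. First, since \MSLP \ is assumed feasible, any feasible policy yields a point in $\Xset_t(\xit)$ for $\mathbb{P}$-a.e.\ $\xiT$, so $\Xset_t(\xit)$ is nonempty almost surely. Second, Assumption \ref{assmp:Boundedness} makes $\Xset_t(\xit)$ bounded almost surely. A nonempty bounded polyhedron has trivial recession cone, and the recession cone of the dual's feasible region equals $\{(\mu,\nu) : \Dpar_t(\xit)\mu + \Gpar_t(\xit)\nu \geq 0\}$, which is precisely the recession cone of $\Xset_t(\xit)$. Hence the dual's feasible region is nonempty and bounded, so its linear objective attains a finite minimum. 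By LP strong duality applied to $\fhatfnc_t$, the primal then has a matching finite optimum, so $\fhatfnc_t(\alphavar,\xit) > -\infty$ for every $\alphavar$, for almost every $\xiT$.

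The argument is essentially routine once the dual-of-dual identification is made; the only mildly delicate point is invoking nonemptiness of $\Xset_t(\xit)$ from the standing feasibility assumption on \MSLP, which is necessary because Assumption \ref{assmp:Boundedness} alone would allow $\Xset_t(\xit)$ to be empty (and hence allow the dual to be infeasible, which would be compatible with primal unboundedness rather than feasibility).
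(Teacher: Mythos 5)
Your proof is correct and follows essentially the same route as the paper: dualize the subproblem \eqref{eqs:fhatfnc}, observe that the dual's feasible region is exactly $\Xset_t(\xit)$ (independent of $\alphavar$), and conclude via LP duality using feasibility of \MSLP\ and boundedness from Assumption \ref{assmp:Boundedness}. Your explicit remarks on the recession cone and on why nonemptiness of $\Xset_t(\xit)$ is needed are just slightly more detailed versions of the paper's argument.
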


\begin{proof} 
We show that for any given $\alphavar \in \R^{\dimd}$ and $\xiT \in \R^{\kpar^{\Tpar}}$, \eqref{eqs:fhatfnc} is feasible for any $t \in [\Tpar]$. 
Let $R_b(\xit)$ and $R_c(\xit)$ denote the right-hand sides of the constraints \eqref{eq:fhatfnc_cons1} and \eqref{eq:fhatfnc_cons2}, respectively. Then, the dual of \eqref{eqs:fhatfnc} 
\begin{align*} 
	 \min \ \ & R_b(\xit)^\top \svar_t(\xit) + R_c(\xit)^\top \xvar_t(\xit) \\ 
		\text{s.t.} \ \ 
		& {\cred (\xvar_t(\xit),\svar_t(\xit)) \in \Xset_t(\xit)},   \\
		& \xvar_t(\xit) \in \R^{\pt}, \ \svar_t(\xit) \in \R^{\qt},
\end{align*}
is bounded due to Assumption \ref{assmp:Boundedness}. It is also feasible as \MSLP \ is assumed to be feasible. This implies that \eqref{eqs:fhatfnc} cannot be infeasible.  
\end{proof}

Now, suppose we have an approximate solution $(\hGvar_1,\halphavar)$ to \PartialLDRdual \ \eqref{eq:dldr2s},  where $\halphavar \in \Aset$
for some set $\Aset$ that satisfies Assumption \ref{assmp:DualPartialRecourse}. 
In this
case, $(\hGvar_1,\halphavar)$ defines a feasible solution to \PartialLDRdual \ \eqref{eq:dldr2s},
and hence its objective value provides a lower 
bound on $\zlldrts$, and hence is a lower bound on $\zopt$.
The objective value of $(\hGvar_1,\halphavar)$ can be estimated by generating an independent sample of $\xiT$, say
$\{\xiT_j\}_{j=1}^{N'}$ (where possibly $N' \gg N$), and computing  
$$ \dpar_1^\top \hGvar_1 + \sum_{t \in [\Tpar]} \Exp \big[ \bpar_t(\xit)^\top {\halphavar}_{t} \basis_{t}(\xit) \big]  + \frac{1}{N'} \sum_{j \in [N']}
	 \Fhatfnc(\halphavar,\xiT_j)  .  $$
The
values $\Fhatfnc(\halphavar,\xiT_j)$ for $j \in [N']$ can also be used to construct a confidence interval on the
objective value of $(\hGvar_1,\halphavar)$, and hence a statistical lower bound on $\zopt$. 

We close this section by discussing an approach for estimating the gap between a primal two-stage LDR policy defined by
$(\hxvar_1,\hbetavar)$
and a dual two-stage LDR policy defined by $(\hGvar_1,\halphavar)$. Following \cite{makmortwood}, the motivation is that if the same sample (common random numbers) is used
in estimating the upper and lower bounds, then the variance of the gap estimator can be reduced if the
upper and lower bound sample estimates are positively correlated.  Specifically, given a sample $\{ \xiT_j \}_{j=1}^{N'}$, the gap
observations are then calculated as
\begin{align*}
 \mathrm{Gap}_j = & 
\Bigl[ \cpar_1^\top \hxvar_1 +  \sum_{t \in [\Tpar]}  \Exp \big[ \hpar_t(\xit)^\top \hbetavar_{t} \basis_{t}(\xit)  \big]
 + \Ffnc(\hbetavar,\xiT_j) \Bigr] \\ 
& - \Bigl[\dpar_1^\top \hGvar_1 + \sum_{t \in [\Tpar]} \Exp \big[\bpar_t(\xit)^\top \halphavar_{t} \basis_{t}(\xit) \big]  + \Fhatfnc(\halphavar,\xiT_j) \Bigr],
\end{align*}
for $j \in [N']$. These values can then be used to construct a confidence interval on the gap.

{\cred 
\section{Illustrative example: Inventory planning}
\label{sec:invplanning}

We first present a numerical example on an inventory planning problem to investigate the performance of two-stage LDR policies and bounds, in comparison to static LDR
policies and bounds.  

\subsection{Problem description}
\label{subsec:invdesc}

We consider a variation of the inventory planning problem used for numerical illustration in
\cite{ben2004adjustable,kuhn2011primal}. The system consists of $I$ factories and a single product type, and the goal is to meet demands over the
planning horizon at minimum expected cost. The model is stated as follows:
\begin{alignat}{2} 
\min \ & \Exp \bigg[\sum_{t \in [T]} \sum_{i \in [I]} c_{it} x_{it}(\xit) \bigg] \\
\text{s.t. } & s_{t-1}(\xit) - s_{t}(\xit) + \sum_{i \in [I]} x_{it}(\xit)  = \xi_t, &\quad& t \in [\Tpar], \ \allxi,
\label{inv:bal} \\
& \underline{s} \leq s_{it}(\xit) \leq \bar{s}   && t \in [\Tpar], \ i \in [I], \ \allxi,  \label{inv:statelim} \\
& 0 \leq x_{it}(\xit) \leq \bar{x}_{i}   && t \in [\Tpar], \ i \in [I], \ \allxi.  \label{inv:prodlim}
\end{alignat}
Here, $\xi_t$ is a scalar random variable representing demand for the product in each $t \in [T]$. The recourse
decision variable $x_{it}(\xit)$ determines amount of the product to produce in factory $i$ at stage $t$, while the
state variable $s_t(\xit)$ represents the inventory level at the end of stage $t$. Constraints \eqref{inv:bal} are the
inventory balance equations, \eqref{inv:statelim}
limit the inventory level to be between lower bound $\underline{s}$ and upper bound $\bar{s}$, and  \eqref{inv:prodlim} are the limits on production in each
stage to be at most $\bar{x}_{i}$. 

The model in \cite{ben2004adjustable,kuhn2011primal} also has a constraint on the total
amount that can be produced from any single factory over all the stages in the planning horizon. Modeling this constraint in our standard
model format requires introducing an additional state variable for each factory $i$, representing the cumulative amount of production from
each factory. Imposing an LDR on that state variable would in turn imply that the variables
$x_{it}(\xit)$ also follow an LDR, and hence for that model the static and two-stage LDR 
policies are identical. This illustrates an example where there is no benefit to using a two-stage 
LDR over a static LDR. In the version we consider, the $x_{it}(\xit)$ are still flexible when
the state variables $s_t(\xit)$ follow an LDR, and hence there is potential for a two-stage LDR to yield better
solutions.

Following the data in \cite{ben2004adjustable,kuhn2011primal}, we consider an instance with $I=3$,
$\underline{s}=500$, $\bar{s}=2000$, and $\bar{x}_{i}=567$ for $i \in [I]$. The random demand $\xi_t$ in stage $t \in [T]$ is uniformly
distributed in the interval $\Xi_t = [(1-\theta) \xi^*\zeta_t,(1+\theta)\xi^*\zeta_t]$, where $\theta = 0.3$ is the
variability parameter, $\xi^*=1000$ is the nominal demand, and $\zeta_t = 1 + (1/2) \sin (\pi(t-1)/12)$ is the seasonality
factor. Finally, the cost coefficients are defined as $c_{it} = \alpha_i \zeta_t$, where $\alpha_1 = 1$, $\alpha_2=1.5$, and $\alpha_3=2$.

\subsection{Implementation details}
\label{sec:invimpl}
For both the static and two-stage LDR policies, we use the standard basis functions, $\xit$, in stage $t$.
For the static LDR, we implemented the deterministic reformulations proposed in
\cite{kuhn2011primal,shapiro2005complexity} to obtain upper bounds (with a primal LDR policy) and lower bounds (with a
dual LDR policy). 

For the primal two-stage LDR policy, we first observe that this problem as stated does not satisfy relatively complete
recourse, Assumption \ref{assmp:MSLPrecourse}, although we remark that this assumption is satisfied in a slightly
modified version of the problem in which 
variables are introduced to allow some amount of demand to go unserved, with a large penalty. Rather than making this modification,
however, we demonstrate how for this problem a set of constraints satisfying Assumption \ref{assmp:PartialRecourse} can be derived.
Using the standard basis functions, the state variables $s_t(\xit)$ take the form
\[ s_t(\xit) = \betat \xit \]
where $\betat \in \R^{1 \times t}$. Thus, the constraints \eqref{inv:statelim} take the form
\[ \underline{s} \leq \betat \xit \leq \bar{s}, \quad t \in [\Tpar], \forall \xi_t \in  [1-\theta
\xi^*\zeta_t,(1+\theta)\xi^*\zeta_t].  \]
These constraints can be reformulated with deterministic linear constraints in an extended variable space using standard robust optimization techniques
\cite{Ben-Tal2002}.  To ensure $\betat, t \in [T]$ are selected such that constraints \eqref{inv:bal} can be satisfied
for some $x_t(\xit), i \in [I]$ satisfying \eqref{inv:prodlim}, it is sufficient to enforce
\[ \xi_t - \sum_{i \in [I]} \bar{x}_{i} \leq  \beta_{t-1} \xi^{t-1} - \betat \xit  \leq  \xi_t, \quad t \in [\Tpar], \forall \xi_t \in  [(1-\theta)
\xi^*\zeta_t,(1+\theta)\xi^*\zeta_t],  \]
where the lower bound is based on the maximum total production and the upper bound is based on the minimum
total production in each period. Again, these constraints can be reformulated as deterministic linear constraints using
robust optimization techniques. 

For both the primal and dual two-stage LDR policy, we use 250 scenarios to construct an SAA, and solve the resulting problem by explicitly solving the determistic equivalent formulation. Given the
resulting LDR coefficients, we then use an independent sample of $10^5$ scenarios to evaluate the quality of the primal
policy and dual bound.

\subsection{Results}

Table \ref{tab:invres} provides the results comparing the bounds obtained for this instance, for varying values of
$T=2,\ldots,10$. The columns under Static LDR provide the lower bound (LB), upper bound (UB), and optimality gap (Gap
(\%)), respectively, where optimality gap for an instance is calculated as (UB - LB)/UB. For the two-stage LDR policy,
95\% confidence intervals for the lower bound (LB CI) and upper bound (UB CI) are provided, along with an estimate of
the optimality gap, which is computed by using the lower end of the lower bound confidence interval and the upper end of
the upper bound confidence interval. 
\begin{table}[htbp]
  \centering
    \begin{tabular}{lrrrrrrrr}
	 \toprule
	  & \multicolumn{3}{c}{Static LDR} &  \multicolumn{3}{c}{2S LDR} \\ 
	  \cmidrule(lr){2-4}
	  \cmidrule(lr){5-7}
   \multicolumn{1}{c}{$T$} & LB & UB & \multicolumn{1}{c}{Gap (\%)} &
\multicolumn{1}{c}{LB CI}  & \multicolumn{1}{c}{UB CI} &
\multicolumn{1}{c}{Gap (\%)} \\ 
\midrule
    2     & 1972.4 & 2026.0 & 2.65  & 1974.4 $\pm$ 2.7   & 1993.9 $\pm$ 1.9   & 1.21  \\ 
    3     & 3825.0 & 3940.2 & 2.92  & 3831.6 $\pm$ 4.0   & 3856.1 $\pm$ 3.2   & 0.82  \\ 
    4     & 6089.8 & 6345.0 & 4.02  & 6102.4 $\pm$ 5.5   & 6146.9 $\pm$ 4.7   & 0.89   \\ 
    5     & 8664.4 & 9021.3 & 3.96  & 8669.1 $\pm$ 6.6   & 8737.6 $\pm$ 5.9   & 0.93  \\ 
    6     & 11482.4 & 11975.0 & 4.11  & 11515.2 $\pm$ 10.2  & 11594.8 $\pm$ 7.3   & 0.84  \\ 
    7     & 14431.1 & 15076.3 & 4.28  & 14482.3 $\pm$ 12.3  & 14618.8 $\pm$ 8.6   & 1.08  \\ 
    8     & 17431.6 & 18200.3 & 4.22  & 17527.4 $\pm$ 13.7  & 17660.4 $\pm$ 9.9   & 0.89  \\ 
    9     & 20251.8 & 21147.9 & 4.24  & 20326.2 $\pm$ 14.9  & 20535.3 $\pm$ 10.9  & 1.14  \\ 
    10    & 22764.8 & 23738.3 & 4.10  & 22809.5 $\pm$ 15.0  & 23067.0 $\pm$ 11.5  & 1.23  \\ 
	 \bottomrule
    \end{tabular}%
     \caption{Comparison of static and two-stage LDR policies for inventory problem.}
 \label{tab:invres}%
\end{table}%
We find that the two-stage LDR policy can yield modestly better lower bound
estimates than the static LDR lower bounds, and somewhat more significantly better primal policies. In terms of solution
time, the static LDR lower and upper bounds were computed very quickly, less than 0.02 seconds in all cases. For
the two-stage LDR policies, solving the two SAA problems took at most 3.98 seconds, and evaluating the bounds took at
most 5.17 seconds. Thus, as expected, in this case where the assumptions required for obtaining a deterministic
formulation of static LDR apply, the solution time for the static LDR policy are significantly faster than for the
two-stage LDR. On the other hand, the solution times for the two-stage LDR policy were still modest, and yielded better
policies.
} 

\section{Illustrative example: Capacity expansion}
\label{sec:capexp}
\new{We next consider a capacity expansion problem. On this problem, we again compare the two-stage LDR policies and bounds to
those obtained from static LDR policies, and also compare to the policy and lower bound obtained from using the 
SDDP algorithm.}

\subsection{Problem description}
\label{subsec:capexpdesc}
We consider a variant of the stochastic capacity expansion problem given in \cite{de2013risk}. We wish to determine an
investment schedule over $\Tpar$ stages for the installation of new capacities of $\Ipar$ different power generation
technologies, together with some operational decisions to meet demand for power over time. The demand is
modeled by a load duration curve, which is approximated by partitioning each stage into $\Jpar$ segments (of
possibly different length). The demand
corresponding to segment $j \in [\Jpar]$ in $t \in [\Tpar]$ is denoted by $\dpar_{tj}$. The amount of new capacity of
technology $i \in [\Ipar]$ added in stage $t \in [\Tpar]$ is represented by $\uplusvar_{ti}$, and is assumed to be
available for use immediately, i.e., at the beginning of stage $t$. The unit cost of $\uplusvar_{ti}$ is denoted by $c^{\uplusvar}_{ti}$.
The state variable $\svar_{ti}$ represents the current installed capacity of technology $i \in [\Ipar]$ in the beginning of
stage $t \in [\Tpar]$, which incurs holding cost of $c^{\svar}_{ti}$ per unit. We assume that it is possible to discard
(i.e., remove) some capacity of $i \in [\Ipar]$ in $t \in [\Tpar]$, denoted by $\uminusvar_{ti}$, at a (possibly zero) unit cost of
$c^{\uminusvar}_{ti}$. 
The operating level of $i \in [\Ipar]$ at $t \in
[\Tpar]$ for meeting the demand in segment $j \in [\Jpar]$ is represented by the decision variable $\xvar_{tij}$, while
the amount of unsatisfied demand is represented as $\zvar_{tj}$, whose unit costs are $c^\xvar_{tij}$ and $c^\zvar_{tj}$,
respectively. Then, the stochastic capacity expansion problem is formulated as an \MSLP \ as follows:
\bsubeq
\label{eqs:CapExpPrimal}
\begin{alignat}{2} 
	 \min \ \ & \Exp  \sum_{t \in [\Tpar]} \bigg[\sum_{i \in [\Ipar]} \Big( c_{ti}^{\uplusvar} \uplusvar_{ti}(\xit) +
	 c_{ti}^{\uminusvar} \uminusvar_{ti}(\xit ) + c_{ti}^\svar && \hspace*{-0.6cm} \svar_{ti}(\xit)    + \sum_{j \in [\Jpar]}
	 c_{tij}^{\xvar} \xvar_{tij}(\xit) \Big) + \sum_{j \in [\Jpar]} c_{tj}^\zvar \zvar_{tj}(\xit)  \bigg]   \label{eq:CapExpPrimal_obj} \\ 
		\text{s.t.} \ \ 
		& \svar_{ti}(\xit) - \svar_{t-1,i}(\xitmone) - \uplusvar_{ti}(\xit) \ \, +  \uminusvar_{ti}(\xit) = 0, &&  t \in [\Tpar], \ \allxi, \ i \in [\Ipar],  \label{eq:CapExpPrimal_con1} \\
		& \svar_{ti}(\xit) - \xvar_{tij}(\xit) \geq 0, &&  t \in [\Tpar], \ \allxi , \ i \in [\Ipar], \ j \in [\Jpar],  \label{eq:CapExpPrimal_con2} \\
		& \sum_{i \in [\Ipar]} \xvar_{tij}(\xit) + \zvar_{tj}(\xit) \geq d_{tj}(\xit), &&  t \in [\Tpar], \ \allxi, \ j \in [\Jpar],  \label{eq:CapExpPrimal_con3} \\
		& 0 \leq \zvar_{tj}(\xit) \leq d_{tj}(\xit), &\ & t \in [\Tpar], \ \allxi, \ j \in [\Jpar], \label{eq:CapExpPrimal_con4} \\
		&  0 \leq \uplusvar_{ti}(\xit) \leq M_{ti}^{u^+} , &&  t \in [\Tpar], \ \allxi, \ i \in [\Ipar], \label{eq:CapExpPrimal_con5} \\
		&  0 \leq \uminusvar_{ti}(\xit) \leq M_{ti}^{u^-} , &&  t \in [\Tpar], \ \allxi, \ i \in [\Ipar], \label{eq:CapExpPrimal_con6} \\
		&  0 \leq \svar_{ti}(\xit) \hspace*{0.04cm} \leq M_{ti}^{s} , &&  t \in [\Tpar], \ \allxi, \ i \in [\Ipar],
		\label{eq:CapExpPrimal_con7}  \\
		& \xvar_{tij}(\xit) \geq 0, && t \in [\Tpar], \ \allxi, \ i \in [\Ipar], \ j \in [\Jpar],
		\label{eq:CapExpPrimal_con9} 
\end{alignat}
\esubeq
The objective function \eqref{eq:CapExpPrimal_obj} minimizes the expected total cost. The constraints
\eqref{eq:CapExpPrimal_con1}  are the only state equations, which keep track of the available capacity of each
technology. 
Constraints \eqref{eq:CapExpPrimal_con2} limit the operating levels to
the available capacity level, while \eqref{eq:CapExpPrimal_con3} ensure that either demand is met, or unmet demand is
recorded in the $\zvar_{tj}$ variable values.
Constraints \eqref{eq:CapExpPrimal_con4}-\eqref{eq:CapExpPrimal_con7} represent the bounds on the shortfall,
installation, removal, and inventory level variables, respectively.  Note that
\eqref{eq:CapExpPrimal_con7} constitute upper bounds also on the $\xvar$ variables due to \eqref{eq:CapExpPrimal_con2},
and thus this formulation satisfies Assumption \ref{assmp:Boundedness}. In addition, we assume that $M_{ti}^{u^-}
\geq M_{t-1,i}^{s}$ which ensures that this formulation satisfies relatively complete recourse, Assumption
\ref{assmp:MSLPrecourse}, since at stage $t$, given any feasible value of $s_{t-1,i}(\xitmone)$, it is feasible to set
$\uminusvar_{ti}(\xit) = s_{t-1,i}(\xitmone)$ for $i \in [\Ipar]$, $\zvar_{tj}(\xit) = d_{tj}(\xit)$ for $j \in [\Jpar]$ and all remaining
variables to zero.



To the extent possible, we use data from \cite{de2013risk}, which focuses on a German system, although we extend
their $3$-stage example to $\Tpar =
5,10,15,20$. In \cite{de2013risk}, there are $\Ipar = 3$ technologies (coal-fired power plant, combined cycle gas turbine and open cycle gas
turbine). Each stage is divided into $L = 8$ periods, and $W = 5$ wind regimes are considered for
each period. We model this as $\Jpar = L W = 40$ segments at each stage, corresponding to each period/wind regime pair.
For $t \geq 2$, the demand corresponding to the segment $j=(\ell,w) \in [L] \times [W]$ is modeled as 
$$\dpar_{tj}(\xit)= \max \bigg\{\dpar_{0,\ell} \prod_{r=2}^t \xipar_r^g - \eta_w K_t^{w} \prod_{r=2}^t \xipar_r^{w}, 0 \bigg\}$$
where $\dpar_{0,\ell} $ is the base demand value of period $\ell$,
$\xipar_t^g$ is a random variable reflecting the demand growth of stage $t$, $\eta_w$ is the parameter denoting the
wind efficiency, $K_t^{w}$ is the wind power generation target,
and $\xipar_t^{w}$ is a random variable representing the growth in the wind power generation in stage $t$. 
The values of $\dpar_{0,\ell}$ and $\eta_w$ are from Tables 2 and 3 of \cite{de2013risk}, and are reproduced in Appendix \ref{sec:appA}.
We use $K_2^{w} = 36.64$ and $K_t^{w}=45.75$ for all $t \geq 3$. We assume 
$\xi_t^g$ has lognormal distribution with $\mu=0.2$ and $\sigma = 0.1+0.01t$, 
and $\xi_t^{w}$ has lognormal distribution with $\mu=0.15$ and $\sigma = 0.25 + 0.025t$.
For the first stage, we use the deterministic demand values of 
$\dpar_{1,j=(\ell,w)} = \dpar_{0,\ell} \Exp[\xipar_1^g] - \eta_w \Exp[\xipar_1^{w}] = 1.229 d_{0,\ell} - 1.207 \eta_w.$
The units of all demands (and all primal decision variables) are gigawatts.

\exclude{
For $t \geq 2$, the demand corresponding to the segment $j=(\ell,w) \in [L] \times [W]$ is modeled as 
$$\dpar_{tj}(\xit)= \max \bigg\{\dpar_{0,\ell} \cumxipar_t^d - \eta_w K_t^{w} \cumxipar_t^2, 0 \bigg\}$$
where $\dpar_{0,\ell} $ is the base demand value of period $\ell$,
$\cumxipar_t^d$ and  $\cumxipar_t^w$ are random variables representing the cumulative demand  and cumulative wind
generation growth, respectively, up to stage $t$, and $\eta_w$ is the parameter denoting the
wind efficiency in regime $w$.
For the first stage, we use the deterministic demand values of 
$\dpar_{1,j=(\ell,w)} = 1.229 d_{0,\ell} - 1.207 \eta_w.$
The units of all demands (and all primal decision variables) are gigawatts. 
The values of $\dpar_{0,\ell}$ and $\eta_w$ are from Tables 2 and 3 of \cite{de2013risk}, and are reproduced in Appendix \ref{sec:appA}.
We use $K_2^{w} = 36.64$ and $K_t^{w}=45.75$ for all $t \geq 3$. We consider two different models for
$\cumxipar_t^g$ and $\cumxipar_t^w$. In the first demand model, DM1, we assume
\[ \cumxipar_t^g =  \xipar_t^g \cumxipar_{t-1}^g, \quad \cumxipar_t^w = \xipar_t^{w}\cumxipar_{t-1}^w, \quad
t=2,\ldots,T, \]
where $\cumxipar_1^g = \cumxipar_1^2 = 1$, and 
for $t=2,\ldots,T$, $\xipar_t^g$ and $\xipar_t^w$ are random variables reflecting the demand and wind power generation
growth in stage $t$, respectively.  We assume 
$\xi_t^g$ has lognormal distribution with $\mu=0.2$ and $\sigma = 0.1+0.01t$, 
and $\xi_t^{w}$ has lognormal distribution with $\mu=0.15$ and $\sigma = 0.25 + 0.025t$.
The second demand model, DM2, is motivated by a model presented in \cite{}, used for modeling reservoir inflows. In this model,  
$\cumxipar_t^g = e^{Y^g_t}$ and $\cumxipar_t^w = e^{Y^w_t}$ for $t=2,\ldots,T$, where $Y^g_t$ and $Y^w_t$ follow the
process
\[ Y^g_t = \mu^g_t + \phi(Y^g_{t-1} - \mu_{t-1}^g) + \epsilon^g_t, \quad Y^w_t = \mu^w_t + \phi(Y^w_{t-1} - \mu_{t-1}^w) + \epsilon^w_t   \]
for $t=2,\ldots,T$, where $\epsilon^g_t$ and $\epsilon^w_t$ are mean zero normal random variables with standard
deviation $\sigma^d_t$ and $\sigma^w_t$, respectively, and we set $Y^g_{1} - \mu_{1}^g = Y^w_{t-1} - \mu_{t-1}^w = 0$.
In our experiments we use $\phi = 0.6$ and $\mu_t^d = 0.2t$, $\mu_t^d = 0.15t$, $\sigma^d_t = 0.01t$, $\sigma^w_t =
0.025t$ for $t=2,\ldots,T$. 
}

We assume there are no holding costs and no costs for removing capacity, i.e., we use $c_{ti}^{\uminusvar} =
c_{ti}^\svar = 0$ for all $i \in [\Ipar], t \in [\Tpar]$. We use discounting to determine the other costs, setting 
$\cpar_{ti}^{\uplusvar} = 5 \iota_i / 1.1^t$, $\cpar_{ti,j=(\ell,w)}^{x} = 0.001 c_i \tau_\ell \tau_w / 1.1^t$ and
$\cpar_{t,j=(\ell,w)}^{z} = \tau_\ell \tau_w / 1.1^t$ where the values of the annualized costs $\iota_i$, 
operation costs $c_i$, $\tau_\ell$ and $\tau_w$ values are from \cite{de2013risk} (see Appendix \ref{sec:appA}). All costs are in million of
Euros.
\new{Finally, we assume the maximum installation per stage is a constant $M_{ti}^{\uplusvar} = C$, and derive redundant upper
bounds on $\svar$ and $\uminusvar$, i.e., $M_{ti}^\svar = \sum_{r=1}^t M_{ri}^{\uplusvar}$ \text{and }
$M_{ti}^{\uminusvar} = M_{t-1,i}^\svar$.
In our experiments we consider two different sets of instances defined using $C=50$ and $C=100$.  }

\subsection{Implementation details}
\label{sec:capimpl}

We compare the primal and dual bounds obtained using two-stage and static LDR. 
\new{For the LDR basis functions, for each $t \in [\Tpar]$, we
let $\Kt = 3$, and }
\begin{align*}
\basis_{t1}(\xit) = 1, \quad \basis_{t2}(\xit) =   \prod_{r=2}^t \xipar_r^g, \quad \basis_{t3}(\xit) = \prod_{r=2}^t \xipar_r^w .  
\end{align*}
Because assumptions A3 and A4 do not hold for this problem (the random variables do not have bounded support and
$\Exp(\xiT | \xit)$ is not linear in $\xit$), the reformulation 
approach from \cite{chen2008linear,kuhn2011primal,shapiro2005complexity} used for \new{the} static LDR cannot be
applied to solve \LDRprimal \ \eqref{eq:ldr} \ and \LDRdual \ \eqref{eq:dldr}. We therefore use a sampling strategy to approximately solve these problems.
Specifically, the sample approximations of \LDRprimal \ \eqref{eq:ldr} \ and \LDRdual \ \eqref{eq:dldr} are
identical to \LDRprimal \ \eqref{eq:ldr} \ and \LDRdual \ \eqref{eq:dldr}, respectively, except that the infinite set of constraints $\allxi$ are replaced
by the finite set corresponding to the sample. 
Approximate solutions to \PartialLDRprimal\ \eqref{eq:ldr2s} and \PartialLDRdual \ \eqref{eq:dldr2s} are obtained by solving the SAA problems
\eqref{prim:saa} and \eqref{dual:saa}, respectively. We solve all sample approximations using the same sample of size $N={\cred 150}
\Tpar$. 

Models \PartialLDRprimal\ \eqref{eq:ldr2s} and \PartialLDRdual\ \eqref{eq:dldr2s} corresponding to model \eqref{eqs:CapExpPrimal} and its dual
are given in Appendix \ref{sec:appB}.
Although the \MSLP \ given in \eqref{eqs:CapExpPrimal} has relatively complete recourse (Assumption \ref{assmp:MSLPrecourse}), 
its two-stage primal approximation \PartialLDRprimal \  \eqref{eq:ldr2s} does not have relatively complete recourse. 
\new{Thus, for obtaining a primal policy using \PartialLDRprimal\ \eqref{eq:ldr2s}, we implement the STT policy proposed in
Section \ref{sec:ubest}. The parameter $\rho$ is determined by conducting a golden section search using a fixed evaluation sample
of $100$ scenarios. Specifically, starting with a lower bound of $0$ and an upper bound of $1000$, a golden section
search is performed in which a simulation of the STT policy with these $100$ scenarios is used to guide the search. In
case the current upper estimate of $\rho$ in the search process yields the minimum estimated cost, the search is
restarted with the new lower estimate set to the current upper estimate of $\rho$, and the new upper estimate set to 
four times the current upper estimate. The search is terminated when either the upper estimate and lower estimates of
$\rho$ differ by less than $1.0$, or the difference in the estimated objective values between the upper and lower
estimates is less than $10^{-6}$ times the sum of the two objective estimates. The resulting value of $\rho$ is then
used in the simulation with $N'=5000\Tpar$ replications to estimate the quality of the resulting policy. The time to select $\rho$ in this process 
was vastly dominated by the time to simulate the policy, but is included in all numerical results that follow.}


Since Assumption
\ref{assmp:Boundedness} is satisfied, any solution to the SAA problem \eqref{dual:saa} provides a feasible solution to
\PartialLDRdual \ \eqref{eq:dldr2s}, and hence evaluating this solution using $N'$ independent replications yields a statistical lower bound
on $\zopt$. \new{In our experiments we use $N'=5000T$ scenarios for estimating the value of this policy.}
Unfortunately, the sample approximations of \LDRprimal\ \eqref{eq:ldr} and \LDRdual \ \eqref{eq:dldr} do not yield policies (primal or
dual) that are feasible under all scenarios. Thus, when evaluating these policies with the independent replications, we report two measures: the average
objective value over scenarios that are feasible, and the fraction of scenarios that are infeasible.  By averaging only over
feasible scenarios, these estimates are optimistically biased, i.e., they underestimate the bound on the primal
problem, and overestimate the bound for the
dual problem. As a result, these estimates do not necessarily provide valid (statistical) upper and lower bounds on
$\zopt$, but we use them to provide a ``best case'' estimate when comparing to the estimates obtained from the two-stage
LDR policies.

All of our numerical results are carried out using IBM ILOG CPLEX 12.6 as the LP solver. {\cred We perform all experiments using a single thread on a Mac OS X 10.12 with 4 GHz Intel Core i7 CPUs and 16 GB RAM.}

 
The SAA problems \eqref{prim:saa} and \eqref{dual:saa} are solved with a sample size of {\cred $N=150T$}.
{\cred The primal SAA problem \eqref{prim:saa} is solved with Benders decomposition, using a single aggregate cut per
time-stage. The Benders decomposition is run until no violated cuts are found. The dual SAA  
problem \eqref{dual:saa} is solved with the bundle-level method \cite{fabian2007,lnnbundle}. The level method for
solving the dual SAA problem is terminated when the relative gap between the lower and upper bounds is less than $10^{-5}$. The details of the Benders decomposition and the level method are provided in Appendix \ref{subsec:appB1} and  \ref{subsec:appB2}, respectively. 
}


\exclude{We solve all the models, namely \LDRprimal , \PartialLDRprimal, \LDRdual \ \eqref{eq:dldr} \ and
\PartialLDRdual, using a sample of size $100 \Tpar$ to get a candidate solution, which is then evaluated using an
independent sample of size  $5000 \Tpar$. More specifically, we construct a 95\% confidence interval (CI) on the cost of the candidate solution, which provides our estimate for the upper or lower bound on the optimal cost of \MSLP.}

\subsection{Comparison between static and two-stage LDR}
\label{subsubsec:PrimalResults}





\new{Tables  \ref{tab:PrimalResults}  and \ref{tab:DualResults} present 95\% confidence intervals (CIs) on the expected costs
of primal policies and dual lower bounds, respectively, obtained using the two-stage and static LDR policies. These
results are reported only for the instances having $T =5,10$. 
The costs are normalized such that for each instance, the estimated lower bound obtained by the two-stage LDR
policy has value 100.0. In these tables, the CIs are presented with their mean and half-width ($\pm$).
In Table \ref{tab:PrimalResults}, the upper end of the CI the two-stage LDR policy is an upper bound on the expected
cost of using that policy, and hence is a statistical upper bound on $\zopt$.  
We also report  under column `Inf.~(\%)' the percentage  of the scenarios (out of $5000 \Tpar$
evaluated scenarios) for which the static LDR policy is infeasible. 
To give an idea of the relative improvement in the expected policy cost obtained by using the two-stage LDR, the column
`$\%$U$\Delta$', presents the percentage increase in the upper bound on the cost obtained with the static policy over the
upper bound on the cost obtained with the two-stage LDR policy.
We observe that the expected cost of the static LDR policy is between 36\% and 102\% higher than that
of the static LDR policy, with the most significant differences occurring with larger time stages.  We also observe
that the static LDR policy is frequently infeasible. Finally, although not presented in the table, we find that the
estimated expected cost of the STT policies was consistently similar (within 2.3\%) to the objective value of the SAA
problem \eqref{prim:saa}, indicating that the STT policy is effectively ``tracking'' the obtained two-stage LDR policy. }

\begin{table}[htbp]
  \centering
    \begin{tabular}{llrrrrrrrr}
    \toprule
          &       &       & \multicolumn{2}{c}{2S LDR} &       & \multicolumn{4}{c}{Static LDR} \\
\cmidrule{4-5}\cmidrule{7-10}    $C$ & $T$     &       & \multicolumn{1}{c}{Mean} & \multicolumn{1}{c}{$\pm$} &       &
\multicolumn{1}{c}{Mean} & \multicolumn{1}{c}{$\pm$} & \multicolumn{1}{c}{Inf. (\%)} &
\multicolumn{1}{c}{\%U$\Delta$ } \\
    \midrule
    50    & 5     &       & 100.8 & 0.3   &       & 138.0 & 0.2   & 3.0   & 36.7 \\
          & 10    &       & 114.6 & 0.6   &       & 232.9 & 0.4   & 3.8   & 102.7 \\*[0.15cm]
    100   & 5     &       & 101.3 & 0.3   &       & 138.4 & 0.2   & 2.9   & 36.5 \\
          & 10    &       & 109.2 & 0.4   &       & 195.8 & 0.3   & 4.0   & 78.8 \\
    \bottomrule
    \end{tabular}%
\caption{Confidence intervals for expected costs of the primal policies.}
  \label{tab:PrimalResults}%
\end{table}%


\new{Considering the CIs of the lower bounds obtained from using two-stage and static LDR policies presented in 
Table \ref{tab:DualResults}, we again find that the static LDR
policy is often infeasible. Column `$\%$L$\Delta$' presents the percentage difference between the lower end of the CI
obtained from the static and two-stage LDR policies, and indicates that the (95\% confidence) lower bounds obtained by
the static LDR range from being similar to 2.9\% lower than those obtained by the two-stage LDR policy.}

\begin{table}[htbp]
  \centering
    \begin{tabular}{llrrrrrrrr}
    \toprule
          &       &       & \multicolumn{2}{c}{2S LDR} &       & \multicolumn{4}{c}{Static LDR} \\
\cmidrule{4-5} \cmidrule{7-10}   $C$ & $T$     &       & \multicolumn{1}{c}{Mean} & \multicolumn{1}{c}{$\pm$} &       &
\multicolumn{1}{c}{Mean} & \multicolumn{1}{c}{$\pm$} & \multicolumn{1}{c}{Inf. (\%)} & \multicolumn{1}{c}{\%
L$\Delta$} \\
    \midrule
    50    & 5     &       & 100.0 & 0.3   &       & 98.7  & 0.3   & 2.4   & -1.3 \\
          & 10    &       & 100.0 & 0.5   &       & 97.1  & 0.4   & 3.5   & -2.9 \\*[0.15cm]
    100   & 5     &       & 100.0 & 0.3   &       & 100.0 & 0.3   & 2.1   & 0.0 \\
          & 10    &       & 100.0 & 0.4   &       & 98.1  & 0.4   & 3.2   & -1.8 \\
    \bottomrule
    \end{tabular}%
\caption{Confidence intervals for expected costs of the dual policies.}
\label{tab:DualResults}
\end{table}%

\exclude{
Lastly, in Table \ref{tab:OptResults}, we provide an upper bound on the optimality gap which is calculated as $100 (\overline{UB}-\underline{LB}) / \underline{LB}$, where $\overline{UB}$ is the upper limit of the primal bound CI and $\underline{LB}$ is the lower limit of the dual bound CI. 
\begin{table}[h]
\centering
\begin{tabular}{rrrrrrrrr}
\toprule
$\Tpar$ & 3 & 4 & 5 & 6 & 7 & 8 & 9 & 10 \\
\midrule
Two-stage LDR & 2.3 & 2.2 & 2.3 & 2.8 & 3.9 & 6.4 & 8.4 & 10.3 \\
Static LDR & 27.3 & 36.4 & 42.3 & 49.3 & 54.1 & 58.7 & 65.3 & 67.1 \\
\bottomrule
\end{tabular}
\caption{Estimated gap }
\label{tab:OptResults}
\end{table}


Finally, we present in Table \ref{tab:GapResults} estimates of the gap between the expected cost of the primal and dual
two-stage and static LDR policies. These gap estimates are obtained using the procedure discussed at the end of Section \ref{sec:duallb}. 
For the static LDR, scenarios for which the static LDR policy was infeasible in either the primal or dual problem are
excluded from the CI calculation, and the column `Inf.~(\%)' reports the percentage of excluded scenarios.
In addition to reporting the mean (Mean) and width ($\pm$) of the confidence intervals, we
also report under `Gap (\%)' the ratio of the gap CI width to the mean of the upper bound estimate, which gives an
estimate of the relative optimality gap between the primal and dual policies.
We see that that the two-stage LDRs yield significant reduction in gap over the static LDRs. The gaps are
smaller for instances with fewer stages $\Tpar$, but never exceed 10\%.

\begin{table}[h]
\centering
\begin{tabular}{r c rrr c rrrr}
\toprule
& & \multicolumn{3}{c}{Two-stage LDR} & & \multicolumn{4}{c}{Static LDR}  \\
\cline{3-5}
\cline{7-10}
$\Tpar$ &  & Mean & $\pm$ & Gap (\%) &  & Mean & $\pm$ & Gap (\%) & Inf.~(\%) \\
\midrule
3 &  & 2257 & 81 & 2.0 &  & 32027 & 106 & 21.5 & 8.3 \\
4 &  & 2629 & 48 & 1.8 &  & 53760 & 140 & 26.9 & 9.3 \\
5 &  & 3511 & 50 & 1.9 &  & 76338 & 187 & 29.9 & 11.0 \\
6 &  & 5142 & 383 & 2.4 &  & 105906 & 243 & 32.8 & 15.4 \\
7 &  & 8687 & 905 & 3.5 &  & 137943 & 302 & 34.8 & 19.0 \\
8 &  & 16900 & 1908 & 5.7 &  & 174842 & 365 & 36.3 & 22.7 \\
9 &  & 26072 & 3155 & 7.5 &  & 223454 & 465 & 38.1 & 23.5 \\
10 &  & 36646 & 5175 & 9.0 &  & 271114 & 573 & 38.7 & 25.3 \\
\bottomrule
\end{tabular}
\caption{Optimality gap estimates}
\label{tab:GapResults}
\end{table}
}


\subsection{Comparison with SDDP}

\new{
We next compare the two-stage LDR approximation with the results obtained using SDDP.  
In order to apply SDDP, we need a formulation
having a finite number of scenarios per stage and  stage-wise independent random
variables. We obtain a model with stage-wise independence by introducing new state variables $v_{t}^g$ and $v_t^w$ to represent
$\prod_{r=2}^t \xi_r^g$ and $\prod_{r=2}^t \xi_r^w$, respectively, which is implemented by adding the state equations
\begin{equation}
\label{sddp:steq}
v_t^g = \xi_t^g  v_{t-1}^g, \quad v_t^w = \xi_{t}^w v_{t-1}^w, \qquad t \in [2,T] 
\end{equation}
and $v_1^g = v_1^w = 1$. With these state variables, the demand in stage $t \geq 2$ is then represented as
$\max \{\dpar_{0,\ell} v_t^g - \eta_w K_t^{w} v_t^w, 0\}$. In particular, the right-hand side of constraints
\eqref{eq:CapExpPrimal_con3} are replaced with the expression $\dpar_{0,\ell} v_t^g - \eta_w K_t^{w} v_t^w$, and the
redundant upper bounds $\zvar_{tj}(\xit) \leq d_{tj}(\xit)$ in \eqref{eq:CapExpPrimal_con4} are removed.
Thus 
the only random variables appearing in stage $t$  constraints are
$\xi_t^g$ and $\xi_t^w$, which are stage-wise independent. 
We use SAA to construct scenario trees with a finite number of outcomes per stage.
In an SAA problem, we approximate the joint distribution of $\xi_t^g$ and $\xi_t^w$ with $200$ scenarios, obtained
by independent Monte Carlo sampling. Note that 
the SAA approximation has $200^{T-1}$ total sample paths. The number of scenarios per stage was determined based on initial
experiments solving multiple replications of the SAA problem, and was found to provide a good trade-off between difficulty in
solving each individual SAA problem by SDDP and the variability of the SAA estimates.  
The optimal value of an SAA problem is random because it is defined by a random sample. The expected value of this
optimal value is a lower bound on the
true optimal value \cite{makmortwood}. Thus, by solving multiple SAA problems with independent samples, a confidence interval on the expected
value of  the SAA problem, and hence a lower bound on the true optimal value, can be obtained.   
We thus generate 25
independently generated SAA problems, and for each one we obtain a lower bound by solving it with SDDP for a limited
time. These replication values are then used to construct a confidence interval on the lower bound on $\zopt$.

We use the SDDP implementation {\tt sddp.jl} \cite{dowsonsddp} to solve each SAA problem. This algorithm is implemented in Julia. 
In benchmarks reported in \cite{dowsonsddp}, it was found
that the computation times for {\tt sddp.jl}  were about 30\% higher than those for the C++ code DOASA
\cite{philpottdoasa}, on a test instance for which DOASA was designed for. The code {\tt sddp.jl} does not directly
support having random constraint coefficients, as in \eqref{sddp:steq}. However, the algorithm does support solving a
problem with an underlying state evolving according to a Markov chain, and with parameters in the constraints dependent
on the state of the Markov chain. Thus, we model the stochastic process as a Markov chain having $200$ states
corresponding to the 200 scenarios of joint realizations of $(\xi_t^g,\xi_t^w)$
in each stage $t \in [2,T]$.  The transition probability from each state in stage $t$ to each state in stage $t+1$ is
$1/200$. To limit the risk that the cutting plane models used in the SDDP algorithm grow too large, we set the
parameter ``cut\_selection\_frequency'' to $50$, which means that after every 50 iterations of the SDDP algorithm, 
cuts that are not currently binding are removed. 
Finally, to be consistent with the implementation of the two-stage LDR
approximation, we run {\tt sddp.jl} serially, although we note that both {\tt sddp.jl} and the two-stage LDR approximation have significant potential for
speedup via parallelization.

The time limit for each SDDP replication is set as follows. We let \basetime\ be the total time required to solve the SAA problems \eqref{prim:saa} and \eqref{dual:saa}, and
evaluate the value of the obtained dual policy with an independent sample of size $N'=5000T$.  We run the SDDP
algorithm on each of the 25 SAA replications with two time limits: TL:=$1.5*$\basetime/25  and $10*$TL. The first time
limit is used to approximately match the total time (over all replications) allotted to the SDDP algorithm with 
the time used by the two-stage LDR approach (where the factor $1.5$ is used to compensate for the fact that {\tt sddp.jl} is
implemented in Julia whereas the two-stage LDR approach is implemented in C++). The second time limit is used to
demonstrate the potential of SDDP to obtain improved lower bounds and policies when given more time. 
Estimating the expected cost of the SDDP and STT policies requires a separate simulation of these policies, which has
very similar computational effort for the two policies, and thus this time is excluded from \basetime.

The lower bound results are reported in Table \ref{tab:sddplb},
in which again the objective values are scaled such that the estimated lower bound obtained by the two-stage LDR
algorithm is 100.0. In the table, \basetime\ is rounded to the nearest second. In aggregate, 40\% of this time
is spent solving \eqref{prim:saa}, 48\%  is spent solving \eqref{dual:saa}, and 12\% is spent evaluating the dual bound with the
independent sample. The table also presents the mean and half-width ($\pm$) of the lower bound obtained using
two-stage LDR and
the SDDP algorithm given  time limits TL and $10*$TL. The columns
\%L$\Delta$ present the percentage difference between the lower end of the 95\% CI on the lower bound obtained by the SDDP algorithm and that obtained
by the two-stage LDR algorithm.  
\begin{table}[htbp]
  \centering
    \begin{tabular}{llrrrrrrrrrrr}
    \toprule
          &       &   &    \multicolumn{2}{c}{\PartialLDRdual} &       & \multicolumn{3}{c}{SDDP TL} &       &
			 \multicolumn{3}{c}{SDDP 10X TL} \\
\cmidrule{4-5}\cmidrule{7-9}\cmidrule{11-13}    $C$ & $T$     &   \multicolumn{1}{c}{\basetime}    & \multicolumn{1}{c}{Mean} &
\multicolumn{1}{c}{$\pm$} &       & \multicolumn{1}{c}{Mean} & \multicolumn{1}{c}{$\pm$} &
\multicolumn{1}{c}{\%L$\Delta$} &
& \multicolumn{1}{c}{Mean} & \multicolumn{1}{c}{$\pm$} & \multicolumn{1}{c}{\%L$\Delta$} \\
    \midrule
    50    & 5     &  188 & 100.0 & 0.3   &       & 100.6 & 0.7   & 0.2   &       & 100.8 & 0.7   & 0.4 \\
          & 10    & 866   & 100.0 & 0.5   &       & 101.5 & 1.3   & 0.6   &       & 103.2 & 1.3   & 2.4 \\
          & 15    & 1728 & 100.0 & 1.3   &       & 94.3  & 2.6   & -7.1  &       & 103.9 & 2.9   & 2.3 \\
          & 20    & 2897 & 100.0 & 1.5   &       & 80.8  & 3.4   & -21.4 &       & 94.2  & 3.8   & -8.2 \\*[0.15cm]
    100   & 5     & 171    & 100.0 & 0.3   &       & 101.5 & 0.7   & 1.1   &       & 101.7 & 0.7   & 1.3 \\
          & 10    & 1094   & 100.0 & 0.4   &       & 100.8 & 1.1   & 0.1   &       & 101.6 & 1.2   & 0.9 \\
          & 15    & 2235 & 100.0 & 0.9   &       & 102.4 & 2.3   & 1.0   &       & 108.6 & 2.5   & 7.1 \\
          & 20    & 3827 & 100.0 & 1.8   &       & 88.3  & 3.4   & -13.5 &       & 101.4 & 3.8   & -0.6 \\
    \bottomrule        \end{tabular}%
  \caption{Comparison of lower bounds obtained from two-stage LDR and SDDP algorithm.}
  \label{tab:sddplb}%
\end{table}%
Here a negative number indicates the lower bound was smaller (worse), and a positive
number indicates an improvement over two-stage LDR. We find that when given a time limit similar to the time used by the two-stage
LDR approximation, the SDDP algorithm
obtains slightly better lower bounds on instances with fewer time stages, but somewhat worse lower bounds on the
instances with more time stages. On the other hand, when given more time, the SDDP algorithm is able to achieve
noticeably better lower bounds on instances with the fewer time stages, and closes much of the gap on the instances with
more stages. 

We next compare estimates of the expected cost of policies obtained with the two-stage LDR and SDDP methods. For the
two-stage LDR policy, the policy and estimate of associated upper bound are determined as described in Section
\ref{sec:capimpl}. For the SDDP algorithm, a policy can be obtained by first solving a (single) SAA approximation
problem, and then using the resulting value-function approximation to drive a policy that is then evaluated via forward 
simulation replications using independently generated values of the random variables (i.e., independent from those used
in the SAA approximation). Unfortunately, the ability to run a forward simulation using samples different from those
used to solve the SDDP problem is not supported in {\tt sddp.jl}. To obtain an estimate of the value of the
policy that can be obtained using SDDP, for each of the 25 SAA replications solved by SDDP, we simulated the
resulting policy using the sample distribution used in the SAA problem to estimate the expected cost of that policy. We then
constructed a 95\% confidence interval of the resulting upper bounds, and these are the values reported in Table 
\ref{tab:sddpub}.
\begin{table}[htbp]
  \centering
    \begin{tabular}{llrrrrrrrrrrr}
    \toprule
          &       &      &  \multicolumn{2}{c}{\PartialLDRprimal} &       & \multicolumn{3}{c}{SDDP TL} &       &
			 \multicolumn{3}{c}{SDDP 10X TL} \\
\cmidrule{4-5}\cmidrule{7-9}\cmidrule{11-13}    $C$     & $T$     &     \multicolumn{1}{c}{$t_{\mathrm{EVAL}}$}   & \multicolumn{1}{c}{Mean} &
\multicolumn{1}{c}{$\pm$}    &       &
\multicolumn{1}{c}{Mean} &  \multicolumn{1}{c}{$\pm$}     & \multicolumn{1}{c}{\%U$\Delta$} &       & \multicolumn{1}{c}{Mean} & \multicolumn{1}{c}{$\pm$} & \multicolumn{1}{c}{\%U$\Delta$} \\
    \midrule
    50    & 5     & 53 & 100.8 & 0.3   &       & 100.7 & 0.7   & 0.3   &       & 100.8 & 0.7   & 0.4 \\
          & 10    & 239 & 114.6 & 0.6   &       & 104.3 & 1.4   & -8.2  &       & 104.1 & 1.3   & -8.4 \\
          & 15    & 491 & 121.0 & 1.3   &       & 109.5 & 3.0   & -8.0  &       & 108.4 & 3.0   & -8.8 \\
          & 20    & 932 & 102.9 & 1.4   &       & 100.6 & 4.1   & 0.4   &       & 100.4 & 4.0   & 0.1 \\*[0.15cm]
    100   & 5     & 53 & 101.3 & 0.3   &       & 101.7 & 0.7   & 0.8   &       & 101.7 & 0.7   & 0.7 \\
          & 10    & 238 & 109.2 & 0.4   &       & 102.0 & 1.2   & -5.9  &       & 102.0 & 1.2   & -5.9 \\
          & 15    & 495 	& 133.2 & 1.2   &       & 113.4 & 2.6   & -13.6 &       & 112.4 & 2.6   & -14.4 \\
          & 20    & 900 & 116.6 & 1.6   &       & 109.6 & 4.0   & -3.9  &       & 109.0 & 3.9   & -4.5 \\
    \bottomrule
    \end{tabular}%
  \caption{Comparison of approximate upper bounds obtained from two-stage LDR and SDDP algorithm.}
  \label{tab:sddpub}
\end{table}%
The column `$t_{\mathrm{EVAL}}$' in this table presents the time, in seconds, to estimate the expected cost of the STT policy.
The remaining columns present
the confidence intervals of the estimated upper bounds in format similar to Table \ref{tab:sddplb}.
As we see from the columns $\%$U$\Delta$, the estimated expected cost of the SDDP policies is in many cases significantly
lower than the estimated expected cost of the two-stage LDR policy, suggesting that SDDP  
obtains significantly better primal policies for this problem.

In summary, for this problem, we find that SDDP provides similar, or slightly worse, lower bounds, and significantly
better primal policies, in a comparable amount of time as  the two-stage LDR approximation, and the lower bounds can
be improved by running SDDP for more time.  Thus, for this problem, SDDP is clearly favored over the
two-stage approximation. Thus, LDR approximations (both static and two-stage) may be most useful for problems in which the assumptions required to apply SDDP do not hold. 
For example, 
in a hydropower planning case study presented in \cite{shapiro:ejor13}, the time series of water inflows, $X_t$, was
modeled as $X_t = e^{Y_t}$, where $Y_t$ follows a first order $AR(1)$ autoregressive  time series, making the model
nonlinear in $X_t$, and hence not solvable directly by SDDP.}

\section{Concluding remarks}
\label{sec:conc}
We propose two-stage LDRs, a new approximate solution method for MSLPs. This approach has two advantages over staic LDRs. Due to the
flexibility in the recourse decisions, our method potentially yields better (at least not worse) bounds and policies
than standard static LDR policies. In addition, as our approach is based on sampling and 2SLP, it
works with very mild assumptions and can take advantage of existing literature on methods for approximately solving 2SLP problems. 
\new{We illustrate the new approach on two example problems, an inventory planning problem and a capacity planning problem, which indicate that two-stage LDR  policies have potential
to yield significantly better policies than static LDR policies.}

In future research it will be interesting to test the use of two-stage LDR \new{policies} on more problems, and to investigate if there
are problem classes where two-stage LDR \new{policies} are provably optimal or near-optimal.

In the primal problem, \new{a} two-stage LDR can be directly applied to
multi-stage stochastic {\it mixed integer programs}, 
provided the integrality restrictions are imposed only on the recourse variables. 
Since availability of algorithms for multi-stage stochastic mixed integer programs is very limited, it will be interesting to explore this extension
further, in particular possibly using ideas from \cite{bertsimas2015design} to obtain a decision rule in the case the
state variables also have integrality constraints. 
\\ \\
\noindent
{\small {\bf Acknowledgements}. This work is supported in part by the National Science Foundation under grant CMMI-1634597, and by the U.S. Department of Energy, Office of
Science, Office of Advanced Scientific Computing Research, Applied Mathematics program under contract number
DE-AC02-06CH11357.
\bibliographystyle{abbrv}
\bibliography{partialldr}

\newpage
\appendix
\section{Data for the capacity expansion problem}
\label{sec:appA}
$\phantom{0}$
\begin{table}[h!]
\centering
\begin{tabular}{l rrr}
\toprule
& $i=1$ & $i=2$ & $i=3$ \\
\midrule
$\iota_i$ (k\euro/MW) & 245.8 & 113.9 & 57.8 \\
$c_i$ (\euro/MWh) & 41.9 & 58.9 & 90.8 \\
\bottomrule
\end{tabular}
\caption{Fixed annual cost and operation cost (Table 1 in \cite{de2013risk})}
\label{tab:CostData}
\end{table}

\begin{table}[h!]
\centering
\begin{tabular}{l rrrrrrrr}
\toprule
& $\ell=1$ & $\ell=2$ & $\ell=3$ & $\ell=4$  & $\ell=5$  & $\ell=6$  & $\ell=7$  & $\ell=8$ \\
\midrule
$d_0,\ell$ (GW) & 77.1 & 71.4 & 65.7 & 60.1 & 54.4 & 48.8 & 43.1 & 37.4 \\
$\tau_\ell$ (h) & 68 & 677 & 1585 & 1781 & 1367 & 1688 & 1289 & 305 \\
\bottomrule
\end{tabular}
\caption{Initial demand (Table 2 in \cite{de2013risk})}
\label{tab:DemandData}
\end{table}

\begin{table}[h!]
\centering
\begin{tabular}{l rrrrr}
\toprule
& $w=1$ & $w=2$ & $w=3$ & $w=4$  & $w=5$ \\
\midrule
$\eta_w$ (\%) & 92.9 & 81.8 & 54.9 & 21.2 & 0.0 \\
$\tau_w$ (\%) & 19.8 & 21.78 & 18.2 & 26.7 & 13.5 \\
\bottomrule
\end{tabular}
\caption{Wind regimes (Table 3 in \cite{de2013risk})}
\label{tab:WindData}
\end{table}

\section{Additional models for the capacity expansion example}
\label{sec:appB}

\subsection{\cred Primal model and Benders decomposition}
\label{subsec:appB1}

\PartialLDRprimal \ of the capacity expansion model is obtained by substituting 
$$\svar_{ti}(\xit) = \sum_{k \in [\Kt]} \basis_{tk}(\xit) \betavar_{tki}$$ 
in \eqref{eqs:CapExpPrimal}. Dropping $\xipar^t$ dependences for variables to simplify the notation, we obtain
\begin{subequations}
\begin{alignat}{2} 
	\min \ \ & \sum_{i \in [\Ipar]} \Big( c_{1i}^{\uplusvar} \uplusvar_{1i} + c_{1i}^{\uminusvar} \uminusvar_{1i} + \sum_{j \in [\Jpar]} c_{1ij}^{\xvar} \xvar_{1ij}(\xit) \Big)
	+ \sum_{j \in [\Jpar]} c_{1j}^\zvar \zvar_{1j} \\
	& \hspace*{0.7cm} +  \sum_{t \in [\Tpar]} \sum_{i \in [\Ipar]} c_{ti}^\svar \sum_{k \in [\Kt]} \Exp \big[
	\basis_{tk}(\xit) \big] \betavar_{tki} 
	+ \sum_{t \in [2,\Tpar]} \Exp [ \ffnc_t(\betavar,\xit) ] \nonumber \\
		\text{s.t.} \ \ 
		& \betavar_{11i} - \uplusvar_{1i} + \uminusvar_{1i} = 0, && \hspace*{-0.5cm} i \in [\Ipar], \label{eq:capprimalb} \\
		& \betavar_{11i} - \xvar_{1ij} \geq 0, && \hspace*{-0.5cm} i \in [\Ipar], \ j \in [\Jpar],   \\		
		& \sum_{i \in [\Ipar]} \xvar_{1ij} + \zvar_{1j} \geq d_{1j}, && \hspace*{-0.5cm}  j \in [\Jpar], \\
		& 0 \leq \zvar_{1j} \leq d_{1j}, && \hspace*{-0.5cm} j \in [\Jpar],  \\
		&  0 \leq \uplusvar_{1i} \leq M_{1i}^{u^+},  \ \ 0 \leq \uminusvar_{1i} \leq M_{1i}^{u^-},  \ \ 0 \leq
		\betavar_{11i} \leq M_{1i}^{s}, && \hspace*{-0.5cm} i \in [\Ipar],  \\
		& \xvar_{1ij} \geq 0, && \hspace*{-0.5cm} i \in [\Ipar], \ j \in [\Jpar],   \label{eq:capprimalg}
\end{alignat}
\end{subequations}
where, for $t \in [2,\Tpar]$, $\ffnc_t(\betavar,\xit)$ is defined as the optimal objective value of the following problem:

\begin{subequations}
\label{eqs:capSP}
\begin{alignat}{2}
\min  \ \ & \sum_{i \in [\Ipar]} \Big( c_{ti}^{\uplusvar} \uplusvar_{i} + c_{ti}^{\uminusvar} \uminusvar_{i} + \sum_{j \in [\Jpar]} c_{tij}^{\xvar} \xvar_{ij} \Big)
	+ \sum_{j \in [\Jpar]} c_{tj}^\zvar \zvar_{j} && \label{eqs:capSPobj}  \\*[0.2cm]
	 	\text{s.t.} \ \ 
		& \uplusvar_{i} - \uminusvar_{i} = \sum_{k \in [\Kt]} \basis_{tk}(\xit) \betavar_{tki} 
			- \hspace*{-0.2cm} \sum_{k \in [\Ktmone]} \basis_{tk}(\xitmone) \betavar_{t-1,k,i}, \ \ && i \in [\Ipar],  \label{eq:capSPscon1} \\*[0.15cm]
		& \xvar_{ij} \leq \sum_{k \in [\Kt]} \basis_{tk}(\xit) \betavar_{tki}, &&  i \in [\Ipar], \ j \in [\Jpar],  \label{eq:capSPscon2} \\*[0.1cm]
		& \sum_{i \in [\Ipar]} \xvar_{ij} + \zvar_{j} \geq d_{tj}(\xit), &&  j \in [\Jpar], \label{eq:capSPscon3} \\
		& 0 \leq \zvar_{j} \leq d_{tj}(\xit), && j \in [\Jpar],  \label{eq:capSPscon4} \\*[0.22cm]
		&  0 \leq \uplusvar_{i} \leq M_{ti}^{u^+}, && i \in [\Ipar], \label{eq:capSPscon5}  \\*[0.1cm]
		&  0 \leq \uminusvar_{i} \leq M_{ti}^{u^-}, && i \in [\Ipar], \label{eq:capSPscon6} \\*[0.1cm]
		& \xvar_{ij} \geq 0, && i \in [\Ipar], \ j \in [\Jpar],   \label{eq:capSPscon10} \\*[0.1cm]
		&  {0 \leq M_{ti}^s  - \sum_{k \in [\Kt]} \basis_{tk}(\xit) \betavar_{tki},} && i  \in [\Ipar], \label{eq:capSPscon7} \\*[0.15cm]
		& {0 \leq \sum_{k \in [\Kt]} \basis_{tk}(\xit) \betavar_{tki},} &&  i \in [\Ipar], \label{eq:capSPscon8} 
\end{alignat} 
\end{subequations}

We note that \PartialLDRprimal \ of the capacity expansion model does not have relatively complete recourse since the recourse constraints \eqref{eq:capSPscon7} and \eqref{eq:capSPscon8} might be violated under some scenarios. 

{\cred 
Let $\xiT_n, n \in [N]$, be an independent and identically distributed (i.i.d.) random sample of the random vector 
$\xiT$. We solve the obtained primal SAA problem with Benders decomposition, using a single aggregate cut per time-stage. That is, we have a master problem of the following form:
\begin{subequations}
\begin{alignat}{2}
\min \ \ & \sum_{i \in [\Ipar]} \Big( c_{1i}^{\uplusvar} \uplusvar_{1i} + c_{1i}^{\uminusvar} \uminusvar_{1i} + \sum_{j \in [\Jpar]} c_{1ij}^{\xvar} \xvar_{1ij}(\xit) \Big)
	+ \sum_{j \in [\Jpar]} c_{1j}^\zvar \zvar_{1j} \\
	& \hspace*{0.7cm} +  \sum_{t \in [\Tpar]} \sum_{i \in [\Ipar]} c_{ti}^\svar \sum_{k \in [\Kt]} \Exp \big[
	\basis_{tk}(\xit) \big] \betavar_{tki} 
	+ \sum_{t \in [2,\Tpar]} \eta_t \nonumber \\
		\text{s.t.} \ \ 
		& \eqref{eq:capprimalb}-\eqref{eq:capprimalg}, \\
		& (\eta_t, \betavar_{t11}, \hdots, \betavar_{t \Kt \Ipar} ) \in \mathcal{O}^t , &&\hspace*{-0.9cm} t \in [2,\Tpar] , \label{eq:mpoptcuts} \\
		& (\betavar_{t11}, \hdots, \betavar_{t \Kt \Ipar} ) \in \mathcal{F}^t, && \hspace*{-0.9cm} t \in [2,\Tpar],  \label{eq:mpfeascuts} \\
		& {0 \leq \sum_{k \in [\Kt]} \basis_{tk}(\xit_n) \betavar_{tki} \leq M_{ti}^s,} && \hspace*{-0.9cm} t \in [2,\Tpar], i \in [\Ipar], \ n \in [N], \label{eq:mpscenconsts} \\
		& \eta_t \geq 0, && \hspace*{-0.9cm} t \in [2,\Tpar]. \label{eq:mpetanonneg}
\end{alignat} 
\end{subequations}
{\sloppypar The variable $\eta_t$ represents $\frac{1}{N} \sum_{n
\in [N]}   [ \ffnc_t(\betavar,\xit_n) ]$, that is the expected second-stage cost at period $t \in [2,\Tpar]$. Note that since all the original decision variables are defined to be
nonnegative, and all the cost parameters are assumed to be nonnegative, $\ffnc_t(\betavar,\xit) \geq 0$ for any given
$\beta$, thus \eqref{eq:mpetanonneg} are valid. Constraints \eqref{eq:mpoptcuts} and \eqref{eq:mpfeascuts} correspond to
the set of Benders optimality and feasibility cuts, respectively. As $\betavar$ variables belong to the master problem,
we add constraints \eqref{eq:capSPscon7} and \eqref{eq:capSPscon8} for each scenario in the sample to the master problem
as \eqref{eq:mpscenconsts} which can be seen as an additional set of feasibility cuts.}

The subproblem decomposes not only by scenario but also by stage. For $t \in [2,\Tpar]$ and $n \in [N]$, we have the corresponding subproblem \eqref{eqs:capSPobj}-\eqref{eq:capSPscon10}, denoted by SP($t,n$).

At every iteration of the Benders decomposition algorithm, we solve the master problem, get a candidate $\beta$ solution which is fixed in the subproblems, and solve all the subproblems. For $t \in [2,\Tpar]$, if there is at least one index $n \in [N]$ for which SP($t,n$) is infeasible, then we generate a Benders feasibility cut and add it to the master problem. Otherwise, we generate a Benders optimality cut, but add it to the master problem only if it is violated at the current master problem solution. We repeat this procedure until all the subproblems are feasible and no violated optimality cuts are found.
}

\exclude{
However, the following proposition shows that the addition of the constraints $\beta \in [0,M]^{\dimp}$ makes the problem \eqref{eqs:capSP} feasible, thus yields relatively complete recourse.

\begin{prop}
\label{prop:ldrrcr}
Let $t \in [\Tpar]$. If $0 \leq \betavar_{tki} \leq M$ for all $i \in [\Ipar], k \in [\Kt]$, then $\ffnc_t(\betavar,\xit) < \infty$ $\allxi$.
\end{prop}

\begin{proof}
Note because the random data $\xipar_t^g$ and $\xipar_t^w$ have lognormal distribution, the defined basis functions
$\basis_{tk}(\xit)$ are nonnegative $\allxi$.
Define a solution $(\hat{u}^+,\hat{u}^-,\hat{x},\hat{z})$ by setting $\hat{u}_{ti}^+ = \sum_{k \in [\Kt]}
\basis_{tk}(\xit) \betavar_{tki}$ and $\hat{u}_{ti}^- = \sum_{k \in [\Ktmone]} \basis_{tk}(\xitmone) \betavar_{t-1,k,i}$ for
all $i \in [\Ipar]$, $\hat{x} = 0$ and $\hat{z}_{tj} = d_{tj}(\xit)$ for all $j \in [\Jpar]$. At
$(\hat{u}^+,\hat{u}^-,\hat{x},\hat{z})$, the constraints \eqref{eq:capSPscon1}, \eqref{eq:capSPscon3}, and
\eqref{eq:capSPscon4} trivially hold. By definition, we have $\basis_{tk}(\xit) \geq 0$ for all $k \in [\Kt]$, and
together with $\betavar_{tki} \geq 0$ for all $i \in [\Ipar], k \in [\Kt]$, this implies that the constraints
\eqref{eq:capSPscon2}, \eqref{eq:capSPscon5}, \eqref{eq:capSPscon6}, and \eqref{eq:capSPscon8} are satisfied. Lastly,
the nonnegativity of the basis function values and the upper bound of $M$ on the $\beta$ variables imply that
\eqref{eq:capSPscon7} is satisfied. Therefore, $(\hat{u}^+,\hat{u}^-,\hat{x},\hat{z})$ is feasible to \eqref{eqs:capSP}
and hence $\ffnc_t(\betavar,\xit) <\infty$.
\qed
\end{proof}}

\subsection{\cred Dual model and level method}
\label{subsec:appB2}

Let $\Lvar, \Gvar, \Tvar^+, \Tvar^-, \pivar^{\uplusvar},\pivar^{\uminusvar}, \pivar^{\svar}$ be the dual variables
associated with the constraints \eqref{eq:CapExpPrimal_con1}-\eqref{eq:CapExpPrimal_con7} in \eqref{eqs:CapExpPrimal}, respectively. Then, the dual
of \eqref{eqs:CapExpPrimal} is:
\bsubeq
\label{eqs:CapExpDual}
\begin{alignat}{2} 
\max \ \ & \Exp  \sum_{t \in [\Tpar]} \Big[ \sum_{j \in [\Jpar]} \dpar_{tj}(\xit) \big( \Tvar^+_{tj}(\xit) \, {\cred - } \, \Tvar^-_{tj}(\xit)\big)&&   \nonumber \\ 
      & \ \ \, \, {\cred - } \, \sum_{i \in [\Ipar]} \bigl(M_{ti}^{\uplusvar} \pivar^{\uplusvar}_{ti}(\xit) + 
		 M_{ti}^{\uminusvar}\pivar^{\uminusvar}_{ti}(\xit) && + M_{ti}^{\svar} \pivar^{\svar}_{ti}(\xit)\bigr) \Big] \label{eq:CapExpPDual_obj} \\ 
		\text{s.t.} \ \ 
		& \Lvar_{ti}(\xit) - \Exp [ \Lvar_{t+1,i}(\xitpone) \mid \xit ]  \nonumber \\
		& \ + \sum_{j \in [\Jpar]} \Gvar_{tij}(\xit) \, {\cred - } \, \pivar^{\svar}_{ti}(\xit) \leq \cpar_{ti}^{\svar}, && \! t \in [\Tpar], \ \allxi, \ i \in [\Ipar],  \label{eq:CapExpPDual_con1} \\
		& {\cred - } \, \pivar^{\uplusvar}_{ti}(\xit) \hspace*{0.03cm}  -\hspace*{0.03cm} \Lvar_{ti}(\xit)  \leq \cpar_{ti}^{\uplusvar},
		&& \! t \in [\Tpar], \ \allxi, \ i \in [\Ipar],  \label{eq:CapExpPDual_con2} \\	
		& {\cred - } \, \pivar^{\uminusvar}_{ti}(\xit) \hspace*{0.03cm}  + \hspace*{0.03cm} \Lvar_{ti}(\xit) \leq \cpar_{ti}^{\uminusvar}, &&  \! t \in [\Tpar], \ \allxi, \ i \in [\Ipar],  \label{eq:CapExpPDual_con3} \\
		& \Tvar^+_{tj}(\xit) \hspace*{0.05cm}  \, {\cred - } \, \hspace*{0.05cm} \Tvar^-_{tj}(\xit) \ \leq \cpar_{tj}^{\zvar}, && \! t \in [\Tpar], \ \allxi, \ j \in [\Jpar],  \label{eq:CapExpPDual_con4} \\
		& \Tvar^+_{tj}(\xit) - \Gvar_{tij}(\xit)  \leq \cpar_{tij}^{\xvar}, &&  \! t \in [\Tpar], \ \allxi, \ i \in [\Ipar], \ j \in [\Jpar],  \label{eq:CapExpPDual_con5} \\
		&  \Gvar_{tij}(\xit) \geq 0, && \! t \in [\Tpar], \ \allxi, \ i \in [\Ipar], \ j \in [\Jpar], \label{eq:CapExpPDual_con6} \\
		&  \Tvar^+_{tj}(\xit), {\cred \Tvar^-_{tj}(\xit)} \geq 0,  && \!  t \in [\Tpar], \ \allxi,  \ j \in [\Jpar],  \label{eq:CapExpPDual_con7}  \\
		& \pivar^{\uplusvar}_{ti}(\xit), \pivar^{\uminusvar}_{ti}(\xit),  \pivar^{\svar}_{ti}(\xit) \, {\cred \geq} \, 0 && \! t \in [\Tpar], \ \allxi, \ i \in [\Ipar].  \label{eq:CapExpPDual_con8} 
\end{alignat}
\esubeq

{\cred Observing that $\Tvar^-$ variables are redundant, we  remove them to simplify the model.}
\PartialLDRdual \ of the capacity expansion model is obtained by substituting 
$$\Lvar_{ti}(\xit) = \sum_{k \in [\Kt]} \basis_{tk}(\xit) \alphavar_{tki}$$ 
in \eqref{eqs:CapExpDual}. Dropping $\xipar^t$ dependences for variables to simplify the notation, we obtain
\bsubeq
\begin{align} 
	 \max \ \ & \sum_{j \in [\Jpar]} \dpar_{1j} \Tvar^+_{1j}
	 \, {\cred -} \, \sum_{i \in [\Ipar]} \big( M_{1i}^{\uplusvar} \pivar^{\uplusvar}_{1i} + M_{1i}^{\uminusvar}
	 \pivar^{\uminusvar}_{1i} + M_{1i}^{\svar} \pivar^{\svar}_{1i} \big)
	  + \sum_{t \in [2,\Tpar]} && \hspace*{-1cm} \Exp [\fhatfnc_t(\alphavar,\xit) ] \\ 
		\text{s.t.} \ \ 
		& \alphavar_{11i} - \sum_{k \in [\Kpar_2]} \Exp \big[ \basis_{2k}(\xipar^2) \big] \alphavar_{2ki} + \sum_{j \in [\Jpar]} \Gvar_{1ij}  \, {\cred -} \, \pivar^{\svar}_{1i} \leq \cpar_{1i}^{\svar}, && \hspace*{-1.2cm} i \in [\Ipar],  \label{eq:dualfirstconst} \\
		& {\cred -} \, \pivar^{\uplusvar}_{1i} \hspace*{0.03cm}  -\hspace*{0.03cm} \alphavar_{11i} \leq \cpar_{1i}^{\uplusvar}, && \hspace*{-1.2cm} i \in [\Ipar], \\
		& {\cred -} \, \pivar^{\uminusvar}_{1i} \hspace*{0.03cm}  + \hspace*{0.03cm} \alphavar_{11i} \leq \cpar_{1i}^{\uminusvar}, && \hspace*{-1.2cm} i \in [\Ipar],  \\
		& \Tvar^+_{1j} \hspace*{0.05cm}  \leq \cpar_{1j}^{\zvar}, && \hspace*{-1.2cm} j \in [\Jpar], \\
		& \Tvar^+_{1j} - \Gvar_{1ij}  \leq \cpar_{1ij}^{\xvar}, && \hspace*{-1.2cm} i \in [\Ipar], \ j \in [\Jpar]  \\
		&  \Gvar_{1ij} \geq 0, && \hspace*{-1.2cm} i \in [\Ipar], \ j \in [\Jpar]  \\
		&  \Tvar^+_{1j} \geq 0, && \hspace*{-1.2cm} j \in [\Jpar],  \\
		& \pivar^{\uplusvar}_{1i}, \pivar^{\uminusvar}_{1i},  \pivar^{\svar}_{1i} \ {\cred \geq} \ 0 && \hspace*{-1.2cm}  i \in [\Ipar], \label{eq:duallastconst}
\end{align}
\esubeq
where, for $t \in [2,\Tpar]$, $\fhatfnc_t(\alphavar,\xit)$ is defined as the optimal objective value of the following problem:

\bsubeq
\label{eq:dualsp}
\begin{align} 
	 \max \ \ & \sum_{j \in [\Jpar]} \dpar_{tj}  \Tvar^+_{j} 
	   \, {\cred -} \, \sum_{i \in [\Ipar]} \big(M_{ti}^{\uplusvar}\pivar^{\uplusvar}_{i}+  M_{ti}^{\uminusvar}  \pivar^{\uminusvar}_{i}+M_{ti}^{\svar} \pivar^{\svar}_{i} 
	   \big) \\ 
		\text{s.t.} \ \ 		
		& \sum_{j \in [\Jpar]} \Gvar_{ij} \, {\cred -} \, \pivar^{\svar}_{i} \leq \cpar_{ti}^{\svar} - \sum_{k \in [\Kt]}
		\basis_{tk}(\xit) \alphavar_{tki} \nonumber \\
		&  \hspace*{2.54cm} + \sum_{k \in [\Ktpone]} \Exp \Big[ \basis_{t+1,k}(\xitpone) \Big | \xit \Big] \alphavar_{t+1,k,i}, && \hspace*{-0.3cm} i \in [\Ipar],  \label{eq:dualspCon1} \\
		&  {\cred -} \, \pivar^{\uplusvar}_{i} \hspace*{0.03cm} \leq \cpar_{ti}^{\uplusvar} + \sum_{k \in [\Kt]} \basis_{tk}(\xit)
		\alphavar_{tki}, && \hspace*{-0.2cm} i \in [\Ipar], \label{eq:dualspCon2} \\
		& {\cred -} \,  \pivar^{\uminusvar}_{i} \hspace*{0.03cm} \leq \cpar_{ti}^{\uminusvar} - \sum_{k \in [\Kt]} \basis_{tk}(\xit)
		\alphavar_{tki}, && \hspace*{-0.2cm} i \in [\Ipar], \label{eq:dualspCon3} \\
		& \Tvar^+_{j} \hspace*{0.05cm}  \leq \cpar_{tj}^{\zvar}, && \hspace*{-0.2cm} j \in [\Jpar], \label{eq:dualspCon4} \\
		& \Tvar^+_{j} - \Gvar_{ij}  \leq \cpar_{tij}^{\xvar}, && \hspace*{-0.2cm} i \in [\Ipar], \ j \in [\Jpar], \label{eq:dualspCon5} \\
		&  \Gvar_{ij} \geq 0, && \hspace*{-0.2cm} i \in [\Ipar], \ j \in [\Jpar],  \label{eq:dualspCon6} \\
		&  \Tvar^+_{j} \geq 0,  && \hspace*{-0.2cm} j \in [\Jpar], \label{eq:dualspCon7} \\
		& \pivar^{\uplusvar}_{i}, \pivar^{\uminusvar}_{i},  \pivar^{\svar}_{i} \ {\cred \geq} \ 0 && \hspace*{-0.2cm}  i \in [\Ipar].  \label{eq:dualspCon8}
\end{align}
\esubeq


{\cred 
Let $\xiT_n, n \in [N]$, be an independent and identically distributed (i.i.d.) random sample of the random vector 
$\xiT$. We solve the obtained dual SAA problem with the bundle-level method, because the Benders decomposition method
converged slowly for this problem. We use cuts aggregated over scenarios, thus introduce $\zeta_{t}$ variable to represent the expected second-stage cost value, i.e., $\frac{1}{N} \sum_{n \in [N]} \fhatfnc_t(\alphavar,\xit_n)$, for $t \in [2,\Tpar]$. 

We observe that the subproblem \eqref{eq:dualsp} can be further decomposed into two: one problem including only the $\uplusvar$ and $\uminusvar$ variables, and the other problem including the remaining set of variables.
\begin{align*}
(\text{DSP}^{upart}): \ \max \ \ &  - \sum_{i \in [\Ipar]} \big(M_{ti}^{\uplusvar}\pivar^{\uplusvar}_{i}+  M_{ti}^{\uminusvar}  \pivar^{\uminusvar}_{i}  \big) \\ 
		\text{s.t.} \ \ 		
		& \eqref{eq:dualspCon2},\eqref{eq:dualspCon3},\eqref{eq:dualspCon8}
\end{align*} 

\begin{align*}
(\text{DSP}^{rest}): \ \max \ \ & \sum_{j \in [\Jpar]} \dpar_{tj} \Tvar^+_{j} - \sum_{i \in [\Ipar]} M_{ti}^{\svar} \pivar^{\svar}_{i}  \\ 
		\text{s.t.} \ \ 		
		& \eqref{eq:dualspCon1},\eqref{eq:dualspCon4}-\eqref{eq:dualspCon7}
\end{align*} 
We exploit this decomposition to disaggregate the optimality cuts in the master problem. Thus, we introduce additional variables $\zeta^{upart}_{t}$ and $\zeta^{rest}_{t}$ for $t \in [2,\Tpar]$ and obtain the following master problem:
\begin{subequations}
\begin{alignat}{2}
\text{(MP)}: \ \max \ \ & \sum_{j \in [\Jpar]} \dpar_{1j} \Tvar^+_{1j}
	 \, {\cred -} \, \sum_{i \in [\Ipar]} \big( M_{1i}^{\uplusvar} \pivar^{\uplusvar}_{1i} + M_{1i}^{\uminusvar}
	 \pivar^{\uminusvar}_{1i} + M_{1i}^{\svar} \pivar^{\svar}_{1i} \big)
	&&  + \sum_{t \in [2,\Tpar]} \zeta_{t}  \\ 
		\text{s.t.} \ \ 
		& \eqref{eq:dualfirstconst}-\eqref{eq:duallastconst}, \label{eq:levelmpfirstconst}\\
		& \zeta_{t} = \zeta^{upart}_{t} + \zeta^{rest}_{t}, &&\hspace*{-0.9cm} t \in [2,\Tpar], \\
		& (\zeta^{upart}_{t}, \alphavar_{t11}, \hdots, \alphavar_{t \Kt \Ipar} ) \in \mathcal{U}^{t} , &&\hspace*{-0.9cm} t \in [2,\Tpar], \\
		& (\zeta^{rest}_{t}, \alphavar_{t11}, \hdots, \alphavar_{t \Kt \Ipar} ) \in \mathcal{R}^{t} , &&\hspace*{-0.9cm} t \in [2,\Tpar], \\
		& \zeta^{upart}_{t} \leq 0, &&\hspace*{-0.9cm} t \in [2,\Tpar], \\
		& \zeta^{rest}_{t} \leq \frac{1}{N} \sum_{n \in [N]} \sum_{j \in [\Jpar]} \dpar_{tj}(\xit_n) \cpar_{tj}^{\zvar}, &&\hspace*{-0.9cm} t \in [2,\Tpar], \label{eq:levelmplastconst}
\end{alignat} 
\end{subequations}
where $\mathcal{U}^{t}$ and $\mathcal{R}^{t}$ represent the optimality cuts derived from problems ($\text{DSP}^{upart}$)
and ($\text{DSP}^{rest}$), respectively. Moreover, we introduce the upper bounds on the new auxiliary variables, which are derived from the subproblems $(\text{DSP}^{upart})$ and $(\text{DSP}^{rest})$.

The level method also uses a quadratic program for regularization which projects the previous iterate on the level set of the current approximation of the objective function. We use the following problem for this projection:
\begin{alignat*}{2}
\text{(QP)}: \ \max \ \ & || \alphavar - \hat{\alphavar} ||^2_2 \\ 
		\text{s.t.} \ \ 
		& \eqref{eq:levelmpfirstconst}-\eqref{eq:levelmplastconst}, \\
		& \sum_{j \in [\Jpar]} \dpar_{1j} \Tvar^+_{1j}
	 \, {\cred -} \, \sum_{i \in [\Ipar]} \big( M_{1i}^{\uplusvar} \pivar^{\uplusvar}_{1i} + M_{1i}^{\uminusvar}
	 \pivar^{\uminusvar}_{1i} + M_{1i}^{\svar} \pivar^{\svar}_{1i} \big) \geq L,
\end{alignat*} 
where $\hat \alphavar$ and $L$ denote the current $\alphavar$ solution (i.e., the previous iterate) and the level target, respectively. The optimal solution values of $\alphavar$ variables determine the next iterate. 

The details of the level method are provided in Algorithm \ref{algo:LevelAlgo} where LB and UB denote lower bound and upper bound, respectively.

\begin{center} 
\begin{algorithm}[H]
\begin{tabbing}
	\= \qquad \= \qquad \= \qquad \= \qquad \=  \kill
	\> 1. Initialize $\hat{\alphavar} = 0$, $\text{LB} = -\infty$, $\text{UB} = \infty$ \\*[0.2cm]
	\> 2. Solve all the subproblems, i.e., $(\text{DSP}^{upart})$ and $(\text{DSP}^{rest})$ for all $t \in [2,\Tpar]$ and
	$n \in [N]$. \\
	 \hspace*{0.27cm} Generate Benders optimality cuts, add them to both (MP) and (QP).  \\
	 \hspace*{0.27cm} Compute the objective value of the current iterate, and set it as LB. \\*[0.2cm]
	 \> 3. do \\*[0.1cm]
	 \> \> Solve (MP). Update UB if (MP) optimal objective value is lower than UB. \\
	 \> \> Set $L = 0.3 \times \text{UB} + 0.7 \times \text{LB}$. \\
	 \> \> Solve (QP) with updated level constraint, to obtain iterate $\hat{\alphavar}$. \\
	 \> \> Solve all the subproblems at current iterate. \\
	 \> \> Generate Benders optimality cuts, and add violated cuts to both (MP) and (QP). \\
	 \> \> Compute the objective value of the current iterate; if it is larger than LB, update LB.  \\*[0.1cm]
	 \> \hspace*{0.24cm}  until $|$UB - LB$ | \ / $ UB $ < 10^{-5}$
	\end{tabbing}
\caption{: Level Algorithm}
\label{algo:LevelAlgo}
\end{algorithm}
\end{center} 
}

\end{document}